\theoremstyle{plain}
\newtheorem{thm}{Theorem}[section]
\newtheorem{cor}[thm]{Corollary}
\newtheorem{prop}[thm]{Proposition}
\newtheorem{lem}[thm]{Lemma}
\theoremstyle{definition}
\newtheorem{dfn}[thm]{Definition}
\theoremstyle{remark}
\newtheorem{rem}[thm]{Remark}
\newcommand{\R}{\mathbb{R}}
\newcommand{\Z}{\mathbb{Z}}
\newcommand{\Int}{\operatorname{Int}}
\newcommand{\id}{\operatorname{id}}
\newcommand{\co}{\colon\thinspace}
\newcommand{\Image}{\mathop{\mathrm{Im}}\nolimits}
\title[Singular fibers of stable maps of $3$-manifolds with boundary]
{Singular fibers of stable maps of 
$3$-manifolds with boundary into surfaces and 
their applications}
\author[Osamu Saeki]{Osamu Saeki}
\author[Takahiro Yamamoto]{Takahiro Yamamoto}
\address{Institute of Mathematics for Industry,
Kyushu University,
Motooka 744, Nishi-ku, Fukuoka 819-0395, Japan}
\email{saeki@imi.kyushu-u.ac.jp}
\address{Faculty of Engineering, 
Kyushu Sangyo University, 
3-1 Matsukadai 2-chome, Higashi-ku,
Fukuoka, 813-8503, Japan}
\email{yama.t@ip.kyusan-u.ac.jp}
\subjclass[2000]{Primary 57R45; 
Secondary 57R35, 
57R90,
58K15,
58K65
}
\keywords{stable map, singular fiber, manifold with boundary, cobordism}
\begin{document}

\begin{abstract}
In this paper, we first classify singular fibers
of proper $C^\infty$ stable maps of $3$-dimensional
manifolds with boundary into surfaces.
Then, we compute the cohomology groups
of the associated universal complex of singular fibers,
and obtain certain cobordism invariants for
Morse functions on compact surfaces with boundary. 
\end{abstract}

\maketitle

\section{Introduction}

Let $M$ and $N$ be smooth manifolds,
where $M$ may possibly have boundary, while
$N$ has no boundary.
For a $C^\infty$ map
$f\co M \to N$ and
a point $q \in N$,
the pre-image $f^{-1}(q)$ is called
the \emph{level set} of $f$ over $q$.
Throughout this paper, we call
the map germ along the level set
\[
f\co (M, f^{-1}(q)) \to (N, q)
\]
the \emph{fiber} over $q$, adopting the terminology
introduced in \cite{Saeki04}.
Furthermore, 
if a point $q \in N$ is a regular value of both
$f$ and $f|_{\partial M}$, then we call the fiber (or
the level set) over $q$ 
a \emph{regular fiber} (resp.\ a \emph{regular
level set});
otherwise, a \emph{singular fiber} (resp.\ a
\emph{singular level set}). 

Some equivalence relations
among fibers are defined as follows.
Let $f_i\co M_i \to N_i$, $i = 0, 1$, be $C^\infty$ maps. 
For $q_i \in N_i$, $i =0, 1$, we say that the fibers over 
$q_0$ and $q_1$ are \emph{$C^\infty$ equivalent} 
(or \emph{$C^0$ equivalent}) if 
for some open neighborhoods $U_i$ of $q_i$ in $N_i$, 
there exist diffeomorphisms (resp.\ homeomorphisms)
$\Phi \co f_0^{-1}(U_0) \to f_1^{-1}(U_1)$ and 
$\varphi \co U_0 \to U_1$ with $\varphi(q_0) =q_1$ 
which make the following diagram commutative:
\[
\begin{CD}
 (f_0^{-1}(U_0), f_0^{-1}(q_0)) @> \Phi>> 
(f_1^{-1}(U_1), f_1^{-1}(q_1)) \\
    @V{f_0}VV  @VV{f_1}V\\
        (U_0,q_0) @> \varphi>> (U_1,q_1).
\end{CD}
\]

Denote by $C^\infty(M, N)$ the set of $C^\infty$ maps $M \to N$ 
equipped with the Whitney $C^\infty$ topology. 
A $C^\infty$ map $f\co M \to N$ is called a 
\emph{$C^\infty$ stable map}
(or a \emph{$C^0$ stable map})
if there exists a neighborhood $N(f) \subset C^\infty(M, N)$ 
of $f$ such that every map $g \in N(f)$ is $C^\infty$ 
right-left equivalent (resp.\ $C^0$ right-left equivalent)
to $f$ \cite{GolubitskyGuillemin}. 
Here, two maps $f$ and $g \in C^\infty(M, N)$ are 
\emph{$C^\infty$ right-left equivalent} (or \emph{$C^0$ 
right-left equivalent})
if there exist diffeomorphisms (resp.\ homeomorphisms)
$\Psi \co M \to M$ 
and $\psi \co N \to N$ such that 
$f\circ \Psi =\psi \circ g$. 
When we just say that $f$ is a \emph{stable map}, it will
mean that it is a $C^\infty$ stable map.

The notion of singular fibers of $C^\infty$ maps between
manifolds without boundary was first
introduced in \cite{Saeki04}, where classifications
of singular fibers of stable maps $M \to N$ with
$(\dim{M}, \dim{N}) = (2, 1), (3, 2)$ and $(4, 3)$
were obtained.
Later, singular fibers
of stable maps of manifolds without boundary were studied 
in \cite{Saeki04, Saeki06, SaekiY06, SaekiY07, Y06, Y07, Y08},
especially in connection with cobordisms.
The first author \cite{Saeki04} established the theory 
of universal complex of singular fibers of $C^\infty$ maps
as an analogy of the Vassiliev complex for map germs \cite{Ohmoto, V}.
This can be used for getting certain cobordism
invariants of singular maps. 
For example, 
the first author~\cite{Saeki04} 
obtained cobordism invariants for stable Morse functions 
on closed surfaces, and 
the second author \cite{Y08} studied the
universal complex of singular fibers of two-colored $C^\infty$ maps,
computing its cohomology groups.
In these theories, 
for a certain set of singular fibers $\tau$, 
cohomology classes of the universal complexes of 
singular fibers of $\tau$-maps provide $\tau$-cobordism invariants
for $\tau$-maps. 

In this paper, we study singular fibers of proper $C^\infty$ 
stable maps of $3$-dimensional manifolds with boundary into surfaces
without boundary. 
By using such fibers, we obtain cobordism invariants for 
stable Morse functions on compact surfaces with boundary. 

The paper is organized as follows. 
In \S2, we classify fibers of proper $C^\infty$ stable maps 
of $3$-dimensional manifolds with boundary into surfaces
without boundary,
with respect to the $C^\infty$ equivalence relation. 
For this we use several known results on the classification
of stable singularities of maps on manifolds with boundary 
\cite{MartinsNabarro13, Shibata00} together
with the techniques developed by the first author in \cite{Saeki04}.
In \S3, we obtain several co-existence formulae of singular fibers 
of $C^\infty$ stable maps of compact $3$-dimensional
manifolds with boundary into surfaces without boundary. These formulae
can be obtained by analyzing the adjacencies of the singular
fibers. 
In \S4, we construct the universal complex of 
singular fibers of proper $C^\infty$ stable maps of 
$3$-dimensional manifolds with boundary into surfaces
without boundary. 
By carefully computing the cohomology groups of the universal complex, 
we obtain certain cobordism invariants for 
stable Morse functions on compact surfaces with boundary. 

Throughout the paper, 
all manifolds and maps between them are smooth
of class $C^\infty$ unless otherwise stated. 
For a smooth map $f \co M \to N$ between manifolds,
we denote by $S(f)$ the set of points in $M$
where the differential of $f$ does not have
maximal rank $\min \{\dim{M}, \dim{N}\}$.
For a space $X$, 
$\id_X$ denotes the identity map of $X$. 
For a (co)cycle $c$, we denote by $[c]$ the (co)homology class 
represented by $c$. 

\section{Classification of singular fibers}

%


In this section, we classify singular fibers of proper
$C^\infty$ stable maps of $3$-dimensional manifolds
with boundary into surfaces without boundary.

%

We can prove the following characterization of $C^\infty$ stable maps 
$f\co M \to N$ by using standard techniques in singularity
theory together with the results on local normal forms obtained
in \cite{MartinsNabarro13, Shibata00}.

\begin{prop}[Shibata~\cite{Shibata00}, 
Martins and Nabarro~\cite{MartinsNabarro13}]\label{prop:stable}
Let $M$ be a $3$-manifold possibly with boundary and
$N$ a surface without boundary. 
A proper $C^\infty$ map $f\co M \to N$ is 
$C^\infty$ stable if and only if it satisfies the following conditions. 
\begin{enumerate}
\item
\textup{(}Local conditions\textup{)} 
In the following, for $p \in \partial M$,
we use local coordinates
$(x, y, z)$ around $p$ such that $\Int{M}$ and $\partial M$
correspond to the sets $\{z > 0\}$ and $\{z = 0\}$, respectively.
\begin{itemize}
\item[(1a)]
For $p \in \Int{M}$, 
the germ of $f$ at $p$ is right-left equivalent to one of the 
following:
\[
(x, y, z) \mapsto 
\begin{cases}
(x, y), & \text{$p$: regular point,}
\\
(x, y^2 +z^2), & \text{$p$: definite fold point,}
\\
(x, y^2 -z^2), & \text{$p$: indefinite fold point,}
\\
(x, y^3 + xy - z^2), & \text{$p$: cusp point.}
\end{cases}
\]
\item[(1b)]
For $p \in \partial M \setminus S(f)$, 
the germ of $f$ at $p$ is right-left equivalent to one of the 
following:
\[
(x, y, z) \mapsto
\begin{cases}
(x, y), & \text{$p$: regular point of $f|_{\partial M}$,}
\\
(x, y^2 +z), & \text{$p$: boundary definite fold point,}
\\
(x, y^2 -z), & \text{$p$: boundary indefinite fold point,}
\\
(x, y^3 + xy +z), & \text{$p$: boundary cusp point.}
\end{cases}
\]
\item[(1c)]
For $p \in \partial M \cap S(f)$, 
the germ of $f$ at $p$ is right-left equivalent to the
map germ
\[
(x, y, z) \mapsto 
(x, y^2 +xz \pm z^2).
\]
\end{itemize}
\item
\textup{(}Global conditions\textup{)} 
For each $q \in f(S(f)) \cup f(S(f|_{\partial M}))$, 
the multi-germ 
\[
(f|_{S(f) \cup S(f|_{\partial M})}, 
f^{-1}(q) \cap (S(f) \cup S(f|_{\partial M})))
\]
is right-left equivalent to one of the
eight multi-germs whose images
are depicted in Figure~\textup{\ref{localMto2}},
where the ordinary curves correspond to 
$f(S(f))$ and the dotted curves to
$f(S(f|_{\partial M}))$: 
$(1)$ and $(4)$ represent immersion mono-germs 
$(\R, 0) \ni t \mapsto (t, 0) \in (\R^2, 0)$ which 
correspond to a single fold point and 
a single boundary fold point respectively, 
$(3)$, $(6)$ and $(7)$ represent normal crossings of 
two immersion germs, each of which corresponds to a fold point
or a boundary fold point, 
$(2)$ and $(5)$ represent cusp mono-germs 
$(\R, 0) \ni t \mapsto (t^2, t^3) \in (\R^2, 0)$ which 
correspond to a cusp point and 
a boundary cusp point respectively, 
and $(8)$ represents the 
restriction of the mono-germ \textup{(1c)},
corresponding to a single point in $\partial M \cap S(f)$,
to the singular point set.
\begin{figure}[tbhp]
\psfrag{q}{$q$}
\psfrag{a}{$(1)$}
\psfrag{b}{$(2)$}
\psfrag{c}{$(3)$}
\psfrag{d}{$(4)$}
\psfrag{e}{$(5)$}
\psfrag{f}{$(6)$}
\psfrag{g}{$(7)$}
\psfrag{h}{$(8)$}
\includegraphics[scale=.7]{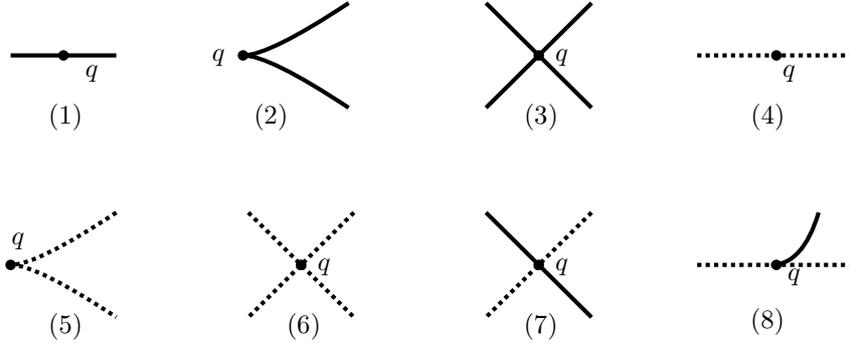}
\caption{The images of multi-germs of $f|_{S(f) \cup S(f|_{\partial M})}$}
\label{localMto2}
\end{figure}
\end{enumerate}
\end{prop}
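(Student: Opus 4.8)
The plan is to deduce the characterization from the infinitesimal criterion for stability, in the form valid for maps of a $3$-manifold with boundary into a boundaryless surface. Since $f$ is proper, $f$ is $C^\infty$ stable if and only if it is infinitesimally stable (the boundary analogue of Mather's theorem, which is among the standard tools underlying \cite{Shibata00, MartinsNabarro13}), and, just as in the boundaryless case, infinitesimal stability can be checked locally over $N$: using that $S(f) \cup S(f|_{\partial M})$ is closed in $M$ and that $f(S(f)) \cup f(S(f|_{\partial M}))$ is closed in $N$ by properness, it suffices to verify that for each $q \in N$ the multi-germ of $f$ along $f^{-1}(q) \cap (S(f) \cup S(f|_{\partial M}))$ is infinitesimally stable. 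Thus the statement reduces to classifying the infinitesimally stable multi-germs of such maps.

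For the necessity of the conditions, assume $f$ is stable. Considering one point $p$ of the singular set at a time, infinitesimal stability of the mono-germ of $f$ at $p$ forces it to be right-left equivalent to one of the local models in the classification of stable mono-germs of $3$-manifolds with boundary into surfaces, due to Shibata \cite{Shibata00} and to Martins and Nabarro \cite{MartinsNabarro13}; this yields exactly (1a) for $p \in \Int M$, (1b) for $p \in \partial M \setminus S(f)$, and (1c) for $p \in \partial M \cap S(f)$, which is condition (1). For condition (2), fix $q \in f(S(f)) \cup f(S(f|_{\partial M}))$ and consider the multi-germ of $f|_{S(f) \cup S(f|_{\partial M})}$ over $q$. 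Infinitesimal stability of $f$ along this fiber imposes a general-position requirement on the branches of the image, and a codimension count — carried out with the explicit normal forms in the manner of \cite{Saeki04} and using $\dim N = 2$ — shows that at most two points of $S(f) \cup S(f|_{\partial M})$ may lie over $q$; that if two do, both must be fold or boundary fold points and their images must cross normally; and that a cusp point, a boundary cusp point, or a point of type (1c) may occur over $q$ only in isolation. Enumerating these possibilities, and observing that the type-(1c) germ restricted to its singular point set is exactly picture (8), one obtains precisely the eight multi-germs of Figure~\ref{localMto2}; this is condition (2).

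For the sufficiency, assume $f$ satisfies (1) and (2). By (1), every mono-germ of $f$ along the singular set is one of the models in (1a)--(1c), and each of these is infinitesimally stable — this is part of the content of \cite{Shibata00, MartinsNabarro13} and may also be verified by a direct computation of the relevant extended tangent spaces. By (2), the multi-germ of $f$ over any point of $f(S(f)) \cup f(S(f|_{\partial M}))$ is one of the eight of Figure~\ref{localMto2}, while over every other point of $N$ it is a regular fiber; each of these is infinitesimally stable, since in pictures (3), (6), (7) the normal crossing of the two branches (resp.\ in picture (8) the tangency recorded there) is precisely the transversality that makes the deformations of the branches span, and pictures (1), (2), (4), (5) are the single mono-germs already treated. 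Hence $f$ is infinitesimally stable, and therefore $C^\infty$ stable.

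The step requiring the most care is the completeness of the list of eight multi-germs in the necessity part: one must explicitly rule out triple points, tangencies between two critical value curves, a cusp or boundary cusp lying over a crossing point, a boundary cusp coinciding with another branch, and the singular set of a type-(1c) point meeting a fold or a boundary fold, each of which is infinitesimally unstable for codimension reasons. These verifications, together with the confirmation that the models in (1c) exhaust the boundary-singular mono-germs, form the technical core; the remaining arguments are routine adaptations of those in \cite{Saeki04}.
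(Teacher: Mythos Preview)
Your proposal is correct and follows essentially the same approach the paper indicates: the paper does not give a detailed proof but merely states that the characterization ``can be proved by using standard techniques in singularity theory together with the results on local normal forms obtained in \cite{MartinsNabarro13, Shibata00},'' and your outline is precisely an unpacking of that sentence --- Mather-type infinitesimal stability reduced to multi-germs, the cited local classifications for the mono-germ list, and a codimension count for the global multi-germ conditions.
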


Note that if a $C^\infty$ map $f\co M \to N$ is $C^\infty$ stable, 
then so is $f|_{\partial M} \co \partial M \to N$. 

In the following, a point $p$ on the boundary such that
the map germ at $p$ is
right-left equivalent
to the normal form
$$(x, y, z) \mapsto (x, y^2 +xz +z^2) \quad
\mbox{\rm or} \quad
(x, y, z) \mapsto (x, y^2 +xz -z^2)$$ 
is called 
a \emph{definite $\Sigma^{2, 0}_{1, 0}$ point} or
an \emph{indefinite $\Sigma^{2, 0}_{1, 0}$ point}, respectively. 


%

Let $M_i$ be smooth manifolds and $A_i \subset M_i$ 
be subsets, $i =0, 1$. 
A continuous map $g \colon A_0 \to A_1$ is said to be 
\emph{smooth}
if for every $q \in A_0$, there exists a 
smooth map $\widetilde{g}\colon V\to M_1$ defined on a 
neighborhood $V$ of $q \in M_0$ such that 
$\widetilde{g}|_{V \cap A_0} =g|_{V \cap A_0}$. 
Furthermore, a smooth map $g\colon A_0 \to A_1$ is called
a \emph{diffeomorphism} if it is a homeomorphism and its inverse is also smooth. 
When there exists a diffeomorphism between $A_0$ and $A_1$, 
we say that they are \emph{diffeomorphic}. 

Then, by Proposition~\ref{prop:stable}, we have the following
local descriptions on singular level sets. In the statement,
it may not be necessary to distinguish some of the 
cases if only the topology of level sets is
concerned; nevertheless, we divide the cases
as below in order to introduce symbols
which take corresponding map germs into
account.

\begin{lem}\label{lemma2.2}
Let $M$ be a $3$-manifold possibly with boundary, $N$
a surface without boundary,
and $f\co M \to N$ a proper $C^\infty$ stable map.
Then, every point $p \in S(f) \cup S(f|_{\partial M})$
has one of the following 
neighborhoods in its corresponding singular level set 
\textup{(}see Figures~\textup{\ref{LocalFibers}} and
\textup{\ref{BFibers}} for reference\textup{)}: 
\begin{enumerate}
\item
isolated point diffeomorphic to 
$\{(y, z) \in \R^2\,|\, y^2 +z^2 =0\}$, 
if $p \in \Int{M}$ is a definite fold point, 
\item
union of two transverse arcs diffeomorphic to 
$\{(y, z) \in \R^2 \,|\, y^2 -z^2 =0\}$, 
if $p \in \Int{M}$ is an indefinite fold point, 
\item
cuspidal arc diffeomorphic to 
$\{(y, z) \in \R^2 \,|\, y^3 -z^2 =0\}$,
if $p \in \Int{M}$ is a cusp point, 
\item
isolated point diffeomorphic to 
$\{(y, z) \in \R^2 \,|\, y^2 +z =0,\, z \geq 0\}$,
if $p \in \partial M$ is a boundary definite fold point, 
\item
arc diffeomorphic to 
$\{(y, z) \in \R^2 \,|\, y^2 -z =0,\, z \geq 0\}$, 
if $p \in \partial M$ is a boundary indefinite fold point, 
\item
arc diffeomorphic to 
$\{(y, z) \in \R^2 \, |\, y^3 +z =0,\, z \geq 0\}$,
if $p \in \partial M$ is a boundary cusp point, 
\item
isolated point diffeomorphic to 
$\{(y, z) \in \R^2 \, |\, y^2 +z^2 =0,\, z \geq 0\}$, 
if $p \in \partial M \cap S(f)$ is a definite $\Sigma^{2, 0}_{1, 0}$ point, 
\item
polygonal line diffeomorphic to 
$\{(y, z) \in \R^2 \, |\, y^2 -z^2 =0,\, z \geq 0\}$, 
if $p \in \partial M \cap S(f)$ is an indefinite $\Sigma^{2, 0}_{1, 0}$ point. 
\end{enumerate}
\end{lem}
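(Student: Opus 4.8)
The plan is to deduce Lemma~\ref{lemma2.2} directly from the local normal forms listed in Proposition~\ref{prop:stable} by a case-by-case computation of the local level set. In each of the eight cases, the relevant map germ $f\co (M,p)\to(N,q)$ is right-left equivalent to one of the normal forms in (1a), (1b), (1c); since right-left equivalence carries level sets to level sets via a diffeomorphism of the source, it suffices to work with the normal form itself. Because the target is $2$-dimensional and the first coordinate of every normal form is the projection $(x,y,z)\mapsto x$, the level set $f^{-1}(q)$ is cut out by $\{x = 0\}\cap\{g(x,y,z)=0\}$ where $g$ is the second component; restricting to the slice $x=0$ reduces the problem to describing the plane curve $\{(y,z)\mid g(0,y,z)=0\}$ inside $\R^2$ (intersected with the half-plane $\{z\ge 0\}$ when $p\in\partial M$), which is exactly the model set asserted in each item. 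So the core of the argument is: substitute $x=0$ into each normal form's second component, and read off the germ of the zero set at the origin.

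Carrying this out: for (1) the definite interior fold gives $y^2+z^2=0$, an isolated point; for (2) the indefinite interior fold gives $y^2-z^2=0$, two transverse arcs; for (3) the interior cusp $(x,y,z)\mapsto(x,y^3+xy-z^2)$ at $x=0$ gives $y^3-z^2=0$, a cuspidal arc; for (4)--(6), the boundary fold/cusp normal forms $(x,y,z)\mapsto(x,y^2\pm z)$ and $(x,y^3+xy+z)$ at $x=0$ yield $z=-y^2$, $z=y^2$, and $z=-y^3$ respectively, each intersected with $\{z\ge 0\}$, producing an isolated point, an arc, and an arc as claimed; for (7) and (8), the $\Sigma^{2,0}_{1,0}$ normal form $(x,y,z)\mapsto(x,y^2+xz\pm z^2)$ at $x=0$ gives $y^2\pm z^2=0$ with $z\ge 0$, i.e.\ an isolated point in the definite case and a polygonal line (two half-lines from the origin) in the indefinite case. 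One should note that the $xz$ term and the $xy$ terms in the cusp forms disappear on the slice $x=0$, so these level sets depend only on the leading part; that is why the same model plane curves as in the boundaryless classification of \cite{Saeki04} reappear, now truncated to a half-plane where a boundary point is involved. The half-plane truncation is what distinguishes, e.g., case (2) from case (8) and case (1) from case (7) topologically in the indefinite subcases.

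The one point that needs slightly more care than a bare substitution is verifying that the local coordinates $(x,y,z)$ furnished by Proposition~\ref{prop:stable}(1b)--(1c) may simultaneously be taken to be boundary-respecting, i.e.\ with $\partial M = \{z=0\}$ and $\Int M=\{z>0\}$, so that the intersection with $\{z\ge0\}$ in the model is legitimate. This is guaranteed because the right-left equivalences in the manifold-with-boundary setting of \cite{MartinsNabarro13, Shibata00} are by diffeomorphisms preserving the boundary (that is the content of the normalization being stated in the adapted coordinates of the proposition), so the diffeomorphism $\Psi$ of the source carries the half-space model to a neighborhood of $p$ respecting $\partial M$. Granting this, the identification of the level set germ with the stated model is immediate, and the smoothness/diffeomorphism notion for subsets introduced just before the lemma is exactly what makes ``diffeomorphic to'' meaningful for these singular (non-manifold) level sets. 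I do not anticipate a genuine obstacle here; the only mild subtlety is bookkeeping the eight cases against Figures~\ref{LocalFibers} and~\ref{BFibers} and checking that the half-plane restriction has been applied in precisely the four boundary cases (4)--(6) and the two $\Sigma^{2,0}_{1,0}$ cases (7)--(8).
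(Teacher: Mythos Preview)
Your proposal is correct and follows exactly the approach the paper takes: the paper simply states that Lemma~\ref{lemma2.2} follows from Proposition~\ref{prop:stable} without writing out any details, and you have supplied precisely the routine substitution of $x=0$ into each normal form that this entails. The computations in all eight cases are accurate, and your remark about the boundary-respecting coordinates is the only point worth making explicit, which you have done.
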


\begin{figure}[tbhp]
\psfrag{(1)}{$(1)$}
\psfrag{(2)}{$(2)$}
\psfrag{(3)}{$(3)$}
\psfrag{(4)}{$(4)$}
\psfrag{(5)}{$(5)$}
\psfrag{(6)}{$(6)$}
\psfrag{(7)}{$(7)$}
\psfrag{(8)}{$(8)$}
\includegraphics{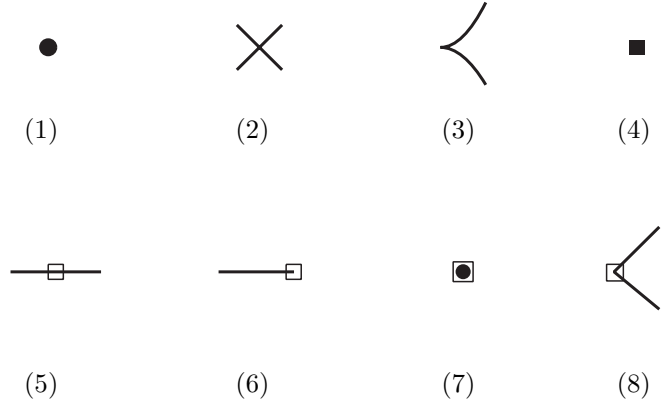}
\caption{Neighborhoods of singular points in their singular level sets}
\label{LocalFibers}
\end{figure}

\begin{figure}[tbhp]
\psfrag{4}{$(4)$}
\psfrag{5}{$(5)$}
\psfrag{6}{$(6)$}
\psfrag{7}{$(7)$}
\psfrag{8}{$(8)$}
\includegraphics[width=0.95\linewidth,height=0.3\textheight,
keepaspectratio]{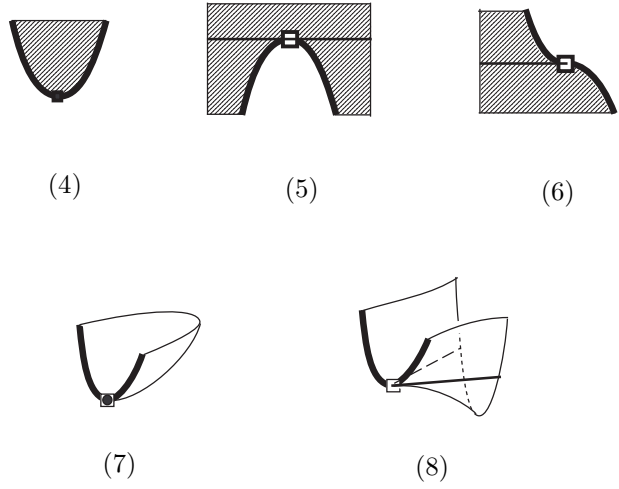}
\caption{Singular level sets touching the boundary}
\label{BFibers}
\end{figure}

\begin{rem}\label{rem:dot}
Note that in Figure~\ref{LocalFibers}, 
the black dot $(1)$, the black square $(4)$, and the
black dot surrounded by a square $(7)$ all represent an isolated point; 
however, we use distinct symbols in order to distinguish their
corresponding map germs.
In the figures, the squares represent
points on the boundary; more precisely, they
are points in $S(f|_{\partial M})$.
\end{rem}


In Figure~\ref{BFibers}, singular level sets that intersect
the boundary are depicted, where the surfaces 
appearing in the figures correspond
to the hypersurface $x=0$, the intersection of the
hypersurface with the boundary is depicted by
thick curves, and the associated map germs restricted
to the hypersurface correspond
to the respective height functions.
In fact, for the cases (4) and (5),
the relevant map germs of $3$-manifolds into surfaces
are right-left equivalent to the suspensions
of the function germs
in the sense of Definition~\ref{def:suspension}.
As to the cases (6), (7) and (8), the relevant map germs
are obtained by using certain ``generic deformations''
of the function germs.

For the local nearby level sets, 
we have the following, which can be proved by
direct calculations using the corresponding normal forms.

\begin{lem}\label{lemma2.3}
Let $M$ be a $3$-manifold possibly with boundary, $N$
a surface without boundary,
$f\co M \to N$ a proper $C^\infty$ stable map,
and 
$p \in S(f) \cup S(f|_{\partial M})$ a singular point 
such that 
$f^{-1}(f(p)) \cap (S(f) \cup S(f|_{\partial M})) =\{p\}$. 
Then, the level sets 
near $p$ are as depicted in Figure~\textup{\ref{NearbyFibers}}:
\begin{enumerate}
\item
$p \in \Int{M}$ is a definite fold point, 
\item
$p \in \Int{M}$ is an indefinite fold point, 
\item
$p \in \Int{M}$ is a cusp point, 
\item
$p \in \partial M$ is a boundary definite fold point, 
\item
$p \in \partial M$ is a boundary indefinite fold point, 
\item
$p \in \partial M$ is a boundary cusp point, 
\item
$p \in S(f) \cap \partial M$ is a 
definite $\Sigma^{2, 0}_{1, 0}$ point, 
\item
$p \in S(f) \cap \partial M$ is an 
indefinite $\Sigma^{2, 0}_{1, 0}$ point,
\end{enumerate}
where each of the $0$-dimensional objects and the
thin $1$-dimensional objects represents
a portion of the level set over the corresponding point 
in the target,
each of the thick curves represents
$f(S(f))$, and each of the dotted curves represents 
$f(S(f|_{\partial M}))$ near $f(p)$.
Furthermore, the dotted squares represent \textup{(}transverse\textup{)}
intersections with $\partial M$.
\end{lem}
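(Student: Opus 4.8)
The plan is to proceed case by case through the eight types of singular points listed, in each case taking the normal form of the map germ furnished by Proposition~\ref{prop:stable} (parts (1a), (1b), (1c)), and directly computing the preimages of points near the critical value. Since the statement concerns the \emph{local} structure of nearby level sets near a single isolated singular point $p$ with $f^{-1}(f(p))\cap(S(f)\cup S(f|_{\partial M}))=\{p\}$, the computation reduces entirely to the local model, with no global complications to worry about. For the interior cases (1)--(3) the relevant normal forms are $(x,y,z)\mapsto(x,y^2+z^2)$, $(x,y,z)\mapsto(x,y^2-z^2)$, and $(x,y,z)\mapsto(x,y^3+xy-z^2)$; here I would fix the first coordinate $x$ near $0$ and analyze the curve $\{y^2\pm z^2=c\}$ (resp.\ the cuspidal family $\{y^3+xy-z^2=c\}$) in the $(y,z)$-plane as the target point $(x,c)$ varies, which is a standard Morse-theoretic picture: an isolated point bifurcating to a circle/empty set for the definite fold, a saddle transition for the indefinite fold, and the familiar swallowtail-free cusp transition for the cusp.

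For the boundary cases I would work in the coordinates $(x,y,z)$ with $\Int M=\{z>0\}$, $\partial M=\{z=0\}$, and intersect the level set $\{f=\text{const}\}$ with the half-space $\{z\ge 0\}$. For (4) and (5) the normal forms $(x,y,z)\mapsto(x,y^2+z)$ and $(x,y,z)\mapsto(x,y^2-z)$ are suspensions (in the sense of Definition~\ref{def:suspension}) of the function germs $z\mapsto y^2+z$-type height functions, and as observed in the discussion preceding the lemma, the level set restricted to $x=0$ is precisely the level set of a height function on the surface $\{x=0\}$; the nearby level sets are then read off from the Morse theory of a half-space near a boundary-fold. For (6), the boundary cusp with normal form $(x,y,z)\mapsto(x,y^3+xy+z)$, I would again fix $x$ and solve $z=c-y^3-xy\ge 0$, tracing how the endpoint of the arc moves and interacts with the boundary as the cusp parameter varies; the paper's remark that these come from ``generic deformations'' of the function germs tells me what picture to expect. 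For (7) and (8) — the definite and indefinite $\Sigma^{2,0}_{1,0}$ points with normal form $(x,y,z)\mapsto(x,y^2+xz\pm z^2)$ — I would fix $x$ and study the conic $\{y^2+xz\pm z^2=c\}$ intersected with $\{z\ge 0\}$: for the $+$ sign (definite) one gets an isolated point opening into an arc with both endpoints on the boundary, and for the $-$ sign (indefinite) the conic is a pair of lines through the origin (a polygonal line when cut by $\{z\ge 0\}$) deforming into a hyperbola, again cut down by the half-space, and the resulting nearby level sets are exactly the configurations drawn in Figure~\ref{NearbyFibers}.

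The routine part is just these elementary quadratic/cubic computations; the only genuinely delicate point is the bookkeeping of how the \emph{boundary stratum} $f(S(f|_{\partial M}))$ (the dotted curves) and the \emph{interior stratum} $f(S(f))$ (the thick curves) sit in the target near $f(p)$ for the cases (6), (7), (8), and correspondingly which arcs of the nearby level sets touch $\partial M$ transversally (the dotted squares in the figures). Getting the relative positions of these two discriminant curves right — in particular verifying that they meet $f(p)$ with the tangencies/crossings shown, which requires differentiating the normal forms along $S(f)$ and $S(f|_{\partial M})$ — is where I would spend most of the care, and it is essentially the content that makes the figure assignment in the lemma correct rather than merely plausible. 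Everything else follows by inspection of the resulting one-parameter families of plane (or half-plane) curves, matching them against Figures~\ref{LocalFibers} and \ref{BFibers} for the level set over $f(p)$ itself and against Figure~\ref{NearbyFibers} for the neighbors.
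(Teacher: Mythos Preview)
Your proposal is correct and is exactly the approach the paper indicates: the paper's entire justification for Lemma~\ref{lemma2.3} is the single sentence ``which can be proved by direct calculations using the corresponding normal forms,'' and your case-by-case analysis of the normal forms from Proposition~\ref{prop:stable} is precisely that direct calculation, carried out with more care and detail than the paper itself provides.
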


\begin{figure}[tbhp]
\psfrag{e}{$\emptyset$}
\psfrag{(1)}{$(1)$}
\psfrag{(2)}{$(2)$}
\psfrag{(3)}{$(3)$}
\psfrag{(4)}{$(4)$}
\psfrag{(5)}{$(5)$}
\psfrag{(6)}{$(6)$}
\psfrag{(7)}{$(7)$}
\psfrag{(8)}{$(8)$}
\includegraphics{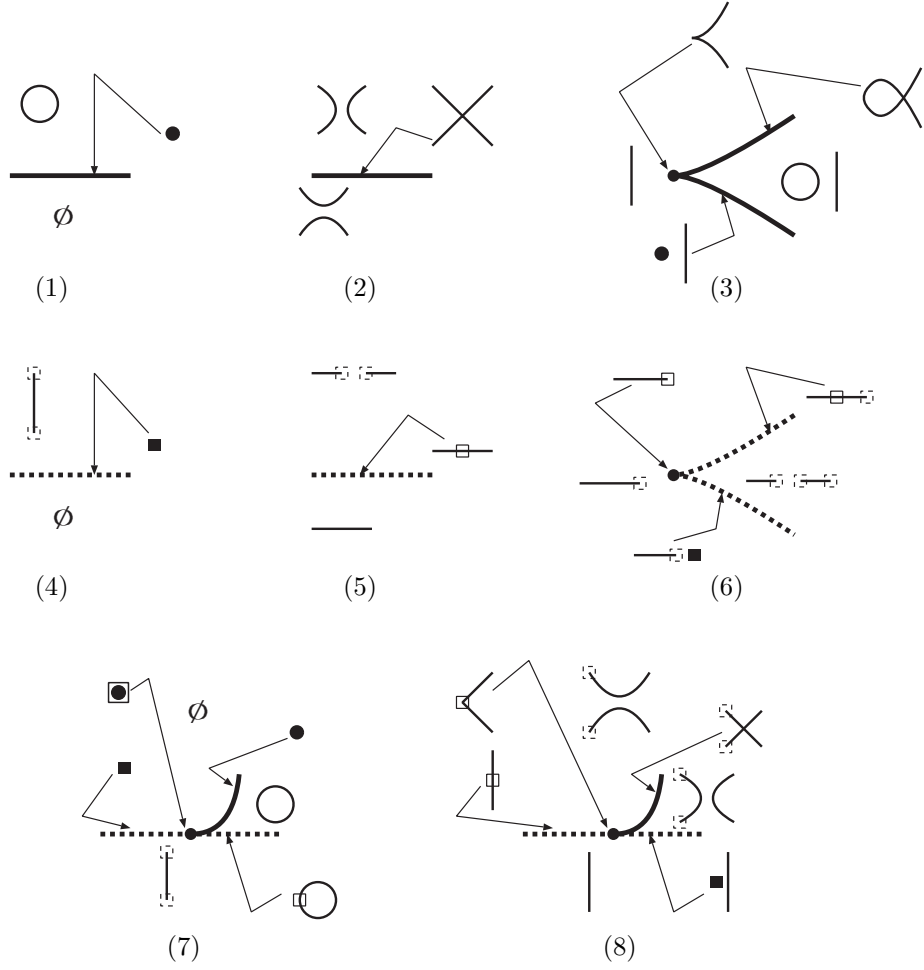}
\caption{Local degenerations of level sets}
\label{NearbyFibers}
\end{figure}

\begin{dfn}
Suppose that we are given a finite number of
fibers of smooth maps, where all the dimensions
of the sources and the targets are the same.
Then, their \emph{disjoint union} is the fiber
corresponding to the single map defined on the
disjoint union of the sources, where the
target spaces are all identified to a single small
open disk. This definition clearly depends
on such identifications; however, in the
following, we can take ``generic identifications''
in such a way that the resulting fiber
is $C^\infty$ stable, and the result
is unique up to $C^\infty$ equivalence
as long as the above identifications are generic.
\end{dfn}

Now, by using the method developed in
\cite{Saeki04}, we get the following
list of singular fibers. 
We omit the proof here. 

\begin{prop}
\textup{(i)} Each diagram in Figures~$\ref{fibersbM2toR}$
and $\ref{fibersbM3toR2}$ uniquely determines
the $C^\infty$ equivalence class of fibers in such a way
that the diagram
represents the corresponding central level set, under the
convention described in Remark~$\ref{rem:dot}$.

\textup{(ii)} Let $f\co M \to N$ be a proper $C^\infty$ stable map of 
a $3$-manifold $M$ with boundary into a surface $N$
without boundary. 
Then, every fiber of $f$ is $C^\infty$ 
equivalent to the disjoint union of 
one of the fibers in the following list, a finite number of 
copies of a fiber of the trivial circle bundle, and a finite
number of copies of a fiber of the trivial $I$-bundle, where
$I = [-1, 1]$: 
\begin{enumerate}
\item
fibers as depicted in Figure~$\ref{fibersbM2toR}$, i.e.\ 
$\widetilde{\mathrm{b0}}^0$, $\widetilde{\mathrm{b0}}^1$, and
$\widetilde{\mathrm{bI}}^\mu$ with $2 \leq \mu \leq 10$,
\item
fibers
$\widetilde{\mathrm{bII}}^{\mu, \nu}$ with $2 \leq \mu
\leq \nu \leq 10$,
where $\widetilde{\mathrm{bII}}^{\mu, \nu}$
means the disjoint union of $\widetilde{\mathrm{bI}}^\mu$
and $\widetilde{\mathrm{bI}}^\nu$,
\item
fibers as depicted in Figure~$\ref{fibersbM3toR2}$, i.e.\ 
$\widetilde{\mathrm{bII}}^\mu$ with $11 \leq \mu \leq 39$,
$\widetilde{\mathrm{bII}}^{a}$,
$\widetilde{\mathrm{bII}}^{b}$,
$\widetilde{\mathrm{bII}}^{c}$,
$\widetilde{\mathrm{bII}}^{d}$,
$\widetilde{\mathrm{bII}}^{e}$ and
$\widetilde{\mathrm{bII}}^{f}$.
\end{enumerate}
More precisely, two such fibers containing
no singular points of the restriction to the boundary
are $C^\infty$ equivalent
if and only if their corresponding level sets are
diffeomorphic. Therefore, in the figures,
the associated level sets are depicted together with
the information on the corresponding local map germs
which are depicted in accordance with Lemmas~\textup{\ref{lemma2.2}}
and \textup{\ref{lemma2.3}}.
\label{ClassifyFibers}
\end{prop}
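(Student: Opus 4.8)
The plan is to follow the strategy of \cite{Saeki04} closely, reducing the global classification of fibers to the local data already recorded in Lemmas~\ref{lemma2.2} and \ref{lemma2.3}, together with the global positional information furnished by Proposition~\ref{prop:stable}(2). First I would fix a proper $C^\infty$ stable map $f\co M\to N$ and a point $q\in N$; properness makes $f^{-1}(q)$ compact, so it is a compact graph-like set whose singular points are exactly the finitely many points of $f^{-1}(q)\cap(S(f)\cup S(f|_{\partial M}))$. Away from these singular points the fiber is, by the local normal forms in Proposition~\ref{prop:stable}(1a)--(1c) restricted to regular points, a smooth compact $1$-manifold with boundary mapping submersively to $(N,q)$; hence each such component is either a circle fiber of a trivial $S^1$-bundle or an $I$-fiber of a trivial $I$-bundle, with the boundary points of the $I$-components lying in $\partial M$ over regular points of $f|_{\partial M}$. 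This already accounts for the ``finite number of copies of a fiber of the trivial circle bundle'' and ``trivial $I$-bundle'' summands in the statement.

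Next I would treat the finitely many singular points of the fiber. For part (i) — that each diagram in Figures~\ref{fibersbM2toR} and \ref{fibersbM3toR2} determines a unique $C^\infty$ equivalence class realizing that central level set — the mono-germ cases are exactly the eight local models of Lemma~\ref{lemma2.2}, whose $C^\infty$ equivalence classes are pinned down by the normal forms in Proposition~\ref{prop:stable}(1); the multi-germ cases are then obtained by the disjoint-union construction of the preceding Definition, whose well-definedness up to $C^\infty$ equivalence under generic identifications is asserted there. The key point is a uniqueness-of-extension argument in the style of \cite{Saeki04}: given the germ of $f$ along the singular part of the fiber (determined up to $C^\infty$ equivalence by the local models and their images in Figure~\ref{localMto2}, i.e.\ by Proposition~\ref{prop:stable}(2)), one extends over the regular (circle and $I$) part by an Ehresmann-type fibration argument, and shows that any two such extensions compatible with the prescribed boundary behaviour are related by a fiber-preserving diffeomorphism. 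The $I$-bundle pieces require the relative (manifold-with-boundary) version of this, but triviality follows since the base is a disk.

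For part (ii), the remaining task is purely enumerative: produce the complete list. Here I would invoke Proposition~\ref{prop:stable}(2), which says that over any $q$ the multi-germ of $f|_{S(f)\cup S(f|_{\partial M})}$ is one of the eight configurations of Figure~\ref{localMto2}; combined with Lemma~\ref{lemma2.3} (the local degenerations of nearby level sets) this bounds the ``complexity'' of an admissible central fiber — concretely, the number and type of singular points of the restriction to the boundary that can occur over a single value $q$ is bounded, since the boundary map $f|_{\partial M}$ is itself a stable Morse-type map and its singular values are isolated with the normal crossings of Figure~\ref{localMto2}. One then runs through all the ways of assembling the mono-germs of Lemma~\ref{lemma2.2} along a connected graph-like level set consistent with these adjacency constraints, discards those that cannot be globally realized, and checks that the surviving configurations are precisely $\widetilde{\mathrm{b0}}^0$, $\widetilde{\mathrm{b0}}^1$, $\widetilde{\mathrm{bI}}^\mu$ ($2\le\mu\le10$), the disjoint unions $\widetilde{\mathrm{bII}}^{\mu,\nu}$, and $\widetilde{\mathrm{bII}}^\mu$ ($11\le\mu\le39$) together with $\widetilde{\mathrm{bII}}^{a},\dots,\widetilde{\mathrm{bII}}^{f}$. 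The final sentence of the statement — that two such fibers with no singular points of the boundary restriction are $C^\infty$ equivalent iff their level sets are diffeomorphic — is the ``rigidity'' half: it follows from the same extension argument as in (i), using that interior fold/cusp germs are determined up to $C^\infty$ equivalence by the topology of their level sets, so the diffeomorphism type of the central fiber recovers the full germ.

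The main obstacle will be the combinatorial completeness of the list in part (ii): one must be certain that no admissible configuration has been overlooked and that every listed configuration is actually globally realizable by some proper stable $f\co M\to N$ (not merely consistent with the local and multi-germ constraints). This is exactly the step that \cite{Saeki04} carries out by an exhaustive but systematic case analysis, and it is why the authors write ``We omit the proof here'' — the enumeration is long but mechanical once Proposition~\ref{prop:stable} and Lemmas~\ref{lemma2.2}--\ref{lemma2.3} are in hand. A secondary technical point is the treatment of the new, genuinely boundary phenomena absent from the closed case: the $\Sigma^{2,0}_{1,0}$ germs (1c) and the interaction of $f(S(f))$ with $f(S(f|_{\partial M}))$ in configurations (6),(7),(8) of Figure~\ref{localMto2}; for these one uses the explicit generic deformations of function germs indicated after Lemma~\ref{lemma2.2}, together with the suspension construction of Definition~\ref{def:suspension}, to build the local models and verify their stability directly.
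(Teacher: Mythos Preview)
Your proposal is correct and follows essentially the same approach as the paper: the authors explicitly state that Proposition~\ref{ClassifyFibers} is proved ``by using the relative version of Ehresmann's fibration theorem together with Proposition~\ref{prop:stable}'' and refer to \cite[Proof of Theorem~3.5]{Saeki04} for the details of the enumerative case analysis, which is precisely the strategy you outline. Your sketch is in fact considerably more detailed than what the paper provides, since the authors omit the proof entirely beyond that one-sentence pointer.
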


\begin{figure}[tbhp]
\psfrag{k=0}{$\kappa =0$}
\psfrag{k=1}{$\kappa =1$}
\psfrag{00}{$\widetilde{\mathrm{b0}}^0$}
\psfrag{01}{$\widetilde{\mathrm{b0}}^1$}
\psfrag{11}{$\widetilde{\mathrm{bI}}^1$}
\psfrag{12}{$\widetilde{\mathrm{bI}}^2$}
\psfrag{13}{$\widetilde{\mathrm{bI}}^3$}
\psfrag{14}{$\widetilde{\mathrm{bI}}^4$}
\psfrag{15}{$\widetilde{\mathrm{bI}}^5$}
\psfrag{16}{$\widetilde{\mathrm{bI}}^6$}
\psfrag{17}{$\widetilde{\mathrm{bI}}^7$}
\psfrag{18}{$\widetilde{\mathrm{bI}}^8$}
\psfrag{19}{$\widetilde{\mathrm{bI}}^9$}
\psfrag{10}{$\widetilde{\mathrm{bI}}^{10}$}
\centering
\includegraphics[%
width=0.95\linewidth,height=0.95\textheight,%
keepaspectratio]{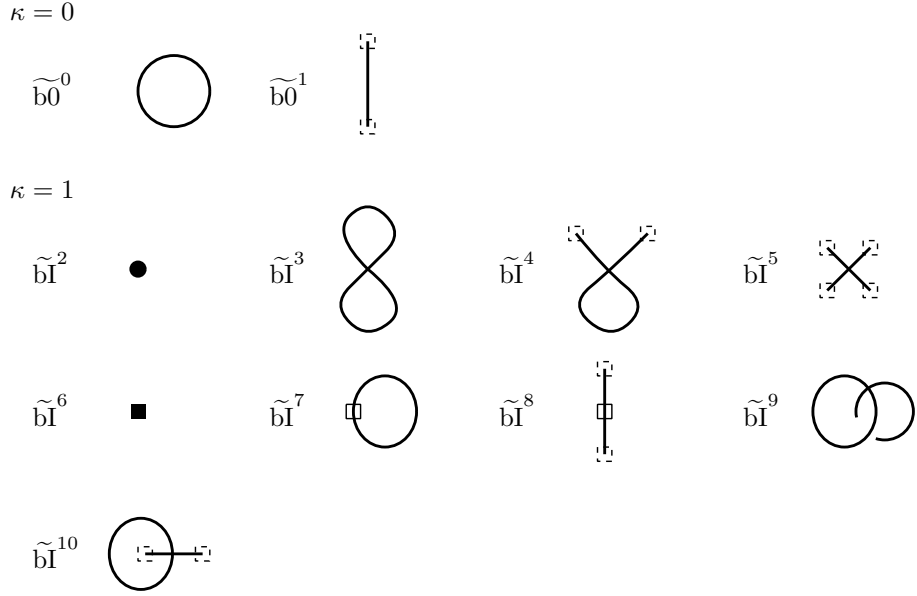}
\caption{List of the fibers of proper 
$C^\infty$ stable maps of $3$-manifolds with boundary 
into surfaces without boundary; $1$}
\label{fibersbM2toR}
\end{figure}

\begin{figure}[tbhp]
\psfrag{k=2}{$\kappa =2$}
\psfrag{211}{$\widetilde{\mathrm{bII}}^{11}$}
\psfrag{212}{$\widetilde{\mathrm{bII}}^{12}$}
\psfrag{213}{$\widetilde{\mathrm{bII}}^{13}$}
\psfrag{214}{$\widetilde{\mathrm{bII}}^{14}$}
\psfrag{215}{$\widetilde{\mathrm{bII}}^{15}$}
\psfrag{216}{$\widetilde{\mathrm{bII}}^{16}$}
\psfrag{217}{$\widetilde{\mathrm{bII}}^{17}$}
\psfrag{218}{$\widetilde{\mathrm{bII}}^{18}$}
\psfrag{219}{$\widetilde{\mathrm{bII}}^{19}$}
\psfrag{220}{$\widetilde{\mathrm{bII}}^{20}$}
\psfrag{221}{$\widetilde{\mathrm{bII}}^{21}$}
\psfrag{222}{$\widetilde{\mathrm{bII}}^{22}$}
\psfrag{223}{$\widetilde{\mathrm{bII}}^{23}$}
\psfrag{224}{$\widetilde{\mathrm{bII}}^{24}$}
\psfrag{225}{$\widetilde{\mathrm{bII}}^{25}$}
\psfrag{226}{$\widetilde{\mathrm{bII}}^{26}$}
\psfrag{227}{$\widetilde{\mathrm{bII}}^{27}$}
\psfrag{228}{$\widetilde{\mathrm{bII}}^{28}$}
\psfrag{229}{$\widetilde{\mathrm{bII}}^{29}$}
\psfrag{230}{$\widetilde{\mathrm{bII}}^{30}$}
\psfrag{231}{$\widetilde{\mathrm{bII}}^{31}$}
\psfrag{232}{$\widetilde{\mathrm{bII}}^{32}$}
\psfrag{233}{$\widetilde{\mathrm{bII}}^{33}$}
\psfrag{234}{$\widetilde{\mathrm{bII}}^{34}$}
\psfrag{235}{$\widetilde{\mathrm{bII}}^{35}$}
\psfrag{236}{$\widetilde{\mathrm{bII}}^{36}$}
\psfrag{237}{$\widetilde{\mathrm{bII}}^{37}$}
\psfrag{238}{$\widetilde{\mathrm{bII}}^{38}$}
\psfrag{239}{$\widetilde{\mathrm{bII}}^{39}$}
\psfrag{240}{$\widetilde{\mathrm{bII}}^{a}$}
\psfrag{241}{$\widetilde{\mathrm{bII}}^{b}$}
\psfrag{242}{$\widetilde{\mathrm{bII}}^{c}$}
\psfrag{243}{$\widetilde{\mathrm{bII}}^{d}$}
\psfrag{244}{$\widetilde{\mathrm{bII}}^{e}$}
\psfrag{245}{$\widetilde{\mathrm{bII}}^{f}$}
\centering
\includegraphics[%
width=0.95\linewidth,height=0.95\textheight,%
keepaspectratio]{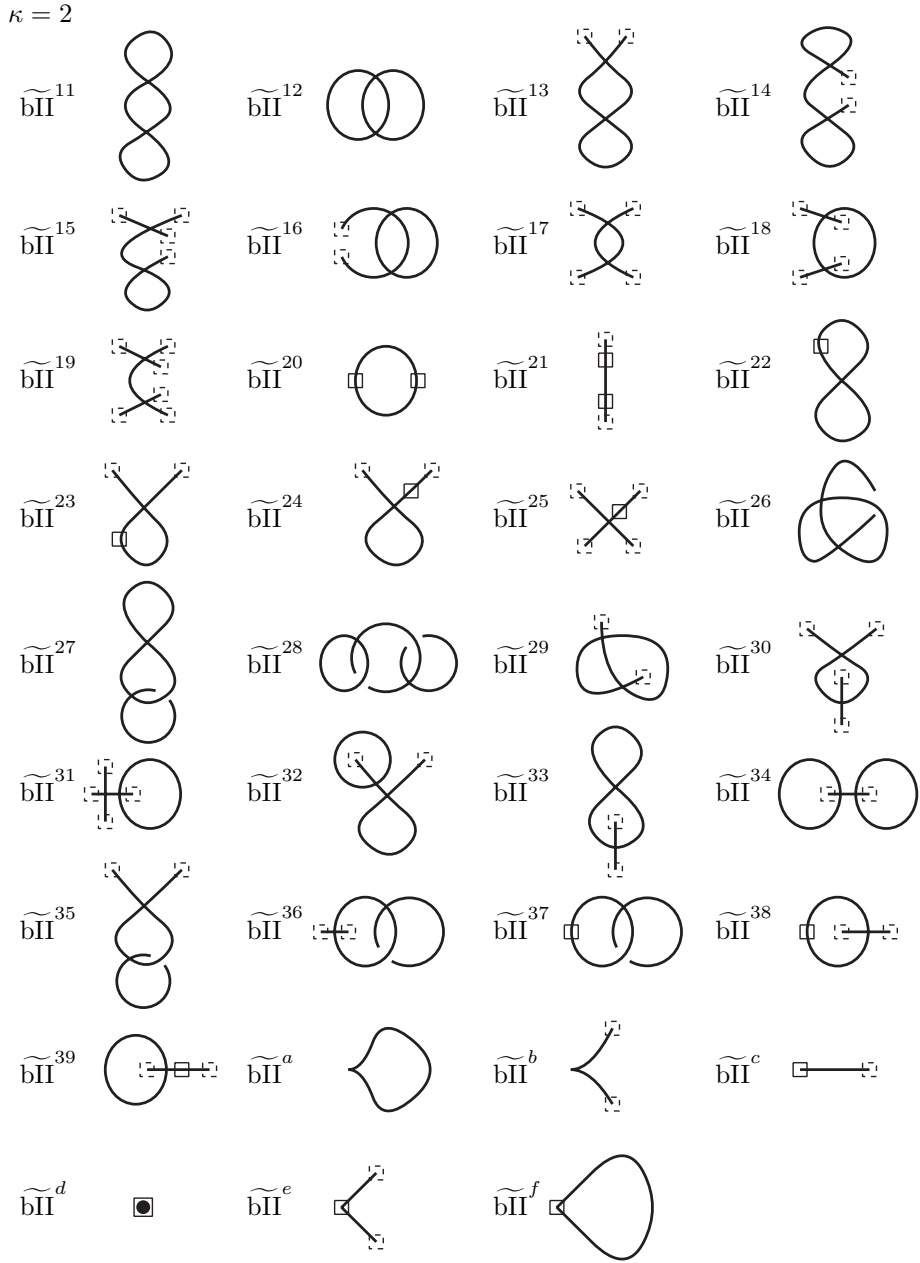}
\caption{List of the fibers of proper 
$C^\infty$ stable maps of $3$-manifolds with boundary 
into surfaces without boundary; $2$}
\label{fibersbM3toR2}
\end{figure}

In Figures~\ref{fibersbM2toR} and $\ref{fibersbM3toR2}$, 
$\kappa$ denotes the \emph{codimension} of the set of points 
in the target $N$ whose corresponding fibers are $C^\infty$ equivalent 
to the relevant one 
(see~\cite{Saeki04} for details). 
Furthermore, the symbols $\widetilde{\mathrm{b0}}^\ast$,
$\widetilde{\mathrm{bI}}^\ast$, and
$\widetilde{\mathrm{bII}}^\ast$ 
mean the names of the 
corresponding fibers. 
Note that we have named the
fibers so that each fiber with connected central level set has its
own number or letter, and a fiber with disconnected
central level set 
has the name consisting of the numbers
of its ``connected components''. 

Note that the fibers whose central level sets
do not intersect $S(f|_{\partial M})$
have been essentially obtained in \cite[\S6]{Saeki06}.

We can prove
Proposition~\ref{ClassifyFibers} by using the relative version 
of Ehresmann's fibration theorem together with
Proposition~\ref{prop:stable}.
See \cite[Proof of Theorem~3.5]{Saeki04} for details. 

Then, we immediately obtain the following corollary. 
(For details, see \cite[Proof of Corollary~3.9]{Saeki04}.)

\begin{cor}
Two fibers of proper $C^\infty$ stable maps
of $3$-manifolds with boundary into surfaces without boundary
are $C^\infty$ equivalent 
if and only if they are
$C^0$ equivalent. 
\end{cor}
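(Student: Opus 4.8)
The plan is as follows. The implication ``$C^\infty$ equivalent $\Rightarrow$ $C^0$ equivalent'' is immediate, since a diffeomorphism is a homeomorphism, so the commutative square witnessing $C^\infty$ equivalence is in particular one witnessing $C^0$ equivalence. Hence the whole content is the converse, and for this I would reduce to the finitely many model fibers of Proposition~\ref{ClassifyFibers}. Given two $C^0$ equivalent fibers $F_0$ and $F_1$ of proper $C^\infty$ stable maps of $3$-manifolds with boundary into surfaces, Proposition~\ref{ClassifyFibers} lets me replace each $F_i$, up to $C^\infty$ equivalence, by a disjoint union $G_i$ of one fiber from the list in Figures~\ref{fibersbM2toR} and \ref{fibersbM3toR2}, a finite number of copies of the regular circle fiber, and a finite number of copies of the regular $I$-fiber; composing the relevant homeomorphisms, $G_0$ and $G_1$ are still $C^0$ equivalent, and it suffices to prove they are $C^\infty$ equivalent. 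Since these model disjoint unions represent the $C^\infty$ equivalence classes without repetition, the task becomes: show that no two distinct ones are $C^0$ equivalent.

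Next I would isolate the topological invariants that a $C^0$ equivalence must preserve and check that they already pin down the $C^\infty$ class of a model fiber. A $C^0$ equivalence $(\Phi,\varphi)$ between fibers over $q_0$ and $q_1$ restricts to a homeomorphism of the central level sets $f_0^{-1}(q_0) \cong f_1^{-1}(q_1)$, carries $\partial M \cap f_0^{-1}(U_0)$ onto $\partial M \cap f_1^{-1}(U_1)$ (boundary points of a topological manifold with boundary are intrinsically characterized, by invariance of domain), and, via $\varphi$, matches the homeomorphism types of all nearby level sets and the manner in which they degenerate to the central one, which is precisely the data recorded in Lemmas~\ref{lemma2.2} and \ref{lemma2.3} and in Figures~\ref{LocalFibers}, \ref{BFibers} and \ref{NearbyFibers}. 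By Proposition~\ref{ClassifyFibers}, for model fibers whose central level sets meet no point of $S(f|_{\partial M})$ the $C^\infty$ class is determined by the diffeomorphism type of the central level set; for the remaining ones it is determined by the central level set together with the local models along $\partial M$. I would then run through the list and verify that the above $C^0$ data separate all these classes: the numbers of circle components, of arc components, and of boundary intersection points of the central level set are read off directly, and the only pairs of model central level sets that are homeomorphic but not diffeomorphic (in the sense introduced before Lemma~\ref{lemma2.2}), or that carry different boundary map germs, are distinguished by the pattern of nearby level sets.

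The step that needs genuine care — and where the argument is not purely formal — is exactly this last distinction for the cusp-type fibers: a fiber containing a cuspidal arc versus one containing only regular arcs, a ``cuspidal circle'' versus the regular circle fiber, a boundary cusp fiber versus a boundary indefinite fold fiber, and likewise the two $\Sigma^{2,0}_{1,0}$ fibers of Lemma~\ref{lemma2.2}(7)--(8). Their central level sets can be homeomorphic (a cuspidal arc is homeomorphic to an interval, a cuspidal circle to $S^1$), so homeomorphy of the central level set alone is too coarse; what rescues the argument is that at a cusp the nearby level sets change their homeomorphism type (components are created or destroyed, or fold arcs collide) while near a regular arc they do not, and a $C^0$ equivalence must respect this entire local family. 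Checking, case by case against Figures~\ref{fibersbM2toR} and \ref{fibersbM3toR2}, that the nearby-level-set data together with the boundary data indeed separate all $C^\infty$ classes with homeomorphic central level sets completes the proof; this is the same mechanism as in \cite[Proof of Corollary~3.9]{Saeki04}, now carried out with the boundary strata included.
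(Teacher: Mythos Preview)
Your proposal is correct and follows exactly the approach the paper indicates: the paper gives no argument of its own, merely stating that the corollary follows immediately from Proposition~\ref{ClassifyFibers} and referring to \cite[Proof of Corollary~3.9]{Saeki04} for details, and what you have written is precisely a spelled-out version of that argument adapted to the boundary case. Your identification of the delicate cases (cusp-type and $\Sigma^{2,0}_{1,0}$ fibers, whose central level sets are homeomorphic but not diffeomorphic to those of lower-codimension fibers) and their resolution via the nearby-fiber data of Lemma~\ref{lemma2.3} is the right mechanism.
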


\begin{rem}
If the source $3$-manifold is orientable, 
then the singular fibers of types 
$\widetilde{\mathrm{bI}}^9$, $\widetilde{\mathrm{bI}}^{10}$, 
$\widetilde{\mathrm{bII}}^{26}$, $\widetilde{\mathrm{bII}}^{27}$, 
$\widetilde{\mathrm{bII}}^{28}$, $\widetilde{\mathrm{bII}}^{29}$, 
$\widetilde{\mathrm{bII}}^{30}$, $\widetilde{\mathrm{bII}}^{31}$, 
$\widetilde{\mathrm{bII}}^{32}$, $\widetilde{\mathrm{bII}}^{33}$, 
$\widetilde{\mathrm{bII}}^{34}$, $\widetilde{\mathrm{bII}}^{35}$, 
$\widetilde{\mathrm{bII}}^{36}$, $\widetilde{\mathrm{bII}}^{37}$, 
$\widetilde{\mathrm{bII}}^{38}$, and $\widetilde{\mathrm{bII}}^{39}$ 
never appear. 
\end{rem}

\begin{rem}
Our classification result of singular fibers
of stable maps of compact orientable
$3$-dimensional manifolds with boundary
into surfaces
has already been applied in computer science.
More precisely, it helps to visualize
characteristic features of certain multi-variate data.
For details, see \cite{ST2014, STSWKCDY2013}.
\end{rem}

\begin{rem}\label{rem:Morse}
Let $V$ be a surface with boundary and $W$ be
the real line $\R$ or the circle $S^1$.
A proper $C^\infty$ function $f\co V \to W$ is 
$C^\infty$ stable (i.e.\ it is a \emph{stable Morse function}) 
if and only if it satisfies the following conditions. 
\begin{enumerate}
\item
\textup{(}Local conditions\textup{)}
In the following, for $p \in \partial V$,
we use local coordinates
$(x, y)$ around $p$ such that $\Int{V}$ and $\partial V$
correspond to the sets $\{y > 0\}$ and $\{y = 0\}$, respectively.
\begin{itemize}
\item[(1a)]
For $p \in \Int{V}$, 
the germ of $f$ at $p$ is right-left equivalent to one of the following:
\[
(x, y) \mapsto 
\begin{cases}
x, & \text{$p$: regular point,}
\\
x^2 \pm y^2, & \text{$p$: fold point or non-degenerate critical point, }
\end{cases}
\]
\item[(1b)]
For $p \in \partial V$, 
the germ of $f$ at $p$ is right-left equivalent to one of the 
following:
\[
(x, y) \mapsto
\begin{cases}
x, & \text{$p$: regular point of $f|_{\partial V}$, }
\\
x^2 \pm y,  & \text{$p$: boundary fold point or}
\\ & \qquad \text{non-degenerate
critical point of $f|_{\partial V}$.}
\end{cases}
\]
\end{itemize}
\item
(Global conditions)
$f(p_1) \ne f(p_2)$ if $p_1 \ne p_2 \in S(f) \cup S(f|_{\partial V})$.
\end{enumerate}
The list of the $C^\infty$ equivalence classes of singular fibers of 
proper stable Morse functions on surfaces with boundary 
can be obtained in a similar fashion. The result
corresponds to those 
appearing in Figure~$\ref{fibersbM2toR}$ with 
$\kappa =0, 1$. In fact, it is not difficult to show that the suspensions
of the fibers of such functions
in the sense of Definition~\ref{def:suspension}
coincide with those appearing in the figure. However, in the
following, by abuse of notation, 
we use the symbols in Figure~$\ref{fibersbM2toR}$ with 
$\kappa =0, 1$ for the fibers of stable 
Morse functions as well.
\end{rem}

\section{Co-existence of singular fibers}

Let $M$ be a compact $3$-dimensional manifold with boundary, $N$
a surface without boundary,
$f\co M \to N$ a $C^\infty$ stable map, and 
$\widetilde{\mathcal{F}}$ 
a $C^\infty$ 
equivalence class of singular fibers of codimension $\geq 1$. 
Define $\widetilde{\mathcal{F}}(f)$ to be the set of points $q \in N$ 
such that the fiber over $q$ is $C^\infty$ equivalent to the disjoint union of 
$\widetilde{\mathcal{F}}$ 
and some copies of a fiber of a trivial circle bundle and 
some copies of a fiber of a trivial $I$-bundle. 
Furthermore, define $\widetilde{\mathcal{F}}_o(f)$ 
(or $\widetilde{\mathcal{F}}_e(f)$) 
to be the subset of $\widetilde{\mathcal{F}}(f)$ which consists of the points 
$q \in N$ such that 
the number of regular fibers, 
namely the total number of $\widetilde{\rm b0}^0$ components and 
$\widetilde{\rm b0}^1$ components in the fiber, 
is odd (resp.\ even). For codimension zero fibers,
by convention, we denote by $\widetilde{\mathrm{b0}}_o(f)$ (or
$\widetilde{\mathrm{b0}}_e(f)$) the set of points
$q \in N$ over which lies a regular fiber consisting
of an odd (resp.\ even) number of components.

If $\widetilde{\mathcal{F}}$ is of
codimension $1$, then the closure of $\widetilde{\mathcal{F}}_o(f)$ 
(or $\widetilde{\mathcal{F}}_e(f)$) 
is a finite graph embedded in $N$. 
Its vertices correspond to points over which lies a singular fiber 
of codimension $2$. 

The handshake lemma of the classical graph theory 
implies the following formulae. In the following,
for a finite set $S$, $|S|$ denotes its cardinality.

\begin{prop}
Let $f\co M \to N$ be a $C^\infty$ stable map of a compact 
$3$-manifold $M$ with boundary into a surface $N$ without
boundary. 
Then, the following numbers are always even:
\begin{enumerate}
\item
$|\widetilde{\rm bII}^{2,3}(f)| +|\widetilde{\rm bII}^{2,4}(f)| 
+|\widetilde{\rm bII}^{2,6}(f)| +|\widetilde{\rm bII}^{2,8}(f)| 
+|\widetilde{\rm bII}^a_e(f)| +|\widetilde{\rm bII}^b_e(f)| 
\\
+|\widetilde{\rm bII}^d_o(f)|$, 
\item
$|\widetilde{\rm bII}^{2,3}(f)| +|\widetilde{\rm bII}^{2,4}(f)| 
+|\widetilde{\rm bII}^{2,6}(f)| +|\widetilde{\rm bII}^{2,8}(f)| 
+|\widetilde{\rm bII}^a_o(f)| +|\widetilde{\rm bII}^b_o(f)| 
\\
+|\widetilde{\rm bII}^d_e(f)|$, 
\item
$|\widetilde{\rm bII}^{2,3}(f)| +|\widetilde{\rm bII}^{3,4}(f)| 
+|\widetilde{\rm bII}^{3,6}(f)| +|\widetilde{\rm bII}^{3,8}(f)| 
+|\widetilde{\rm bII}^{13}_e(f)| +|\widetilde{\rm bII}^{22}_o(f)| 
\\
+|\widetilde{\rm bII}^a_o(f)|$, 
\item
$|\widetilde{\rm bII}^{2,3}(f)| +|\widetilde{\rm bII}^{3,4}(f)| 
+|\widetilde{\rm bII}^{3,6}(f)| +|\widetilde{\rm bII}^{3,8}(f)| 
+|\widetilde{\rm bII}^{13}_o(f)| +|\widetilde{\rm bII}^{22}_e(f)| 
\\
+|\widetilde{\rm bII}^a_e(f)|$, 
\item
$|\widetilde{\rm bII}^{2,4}(f)| +|\widetilde{\rm bII}^{3,4}(f)| 
+|\widetilde{\rm bII}^{4,6}(f)| +|\widetilde{\rm bII}^{4,8}(f)| 
+|\widetilde{\rm bII}^{13}_e(f)| +|\widetilde{\rm bII}^{22}_o(f)| 
\\
+|\widetilde{\rm bII}^{23}_o(f)| +|\widetilde{\rm bII}^{24}(f)| 
+|\widetilde{\rm bII}^b_o(f)| +|\widetilde{\rm bII}^f_o(f)|$, 
\item
$|\widetilde{\rm bII}^{2,4}(f)| +|\widetilde{\rm bII}^{3,4}(f)| 
+|\widetilde{\rm bII}^{4,6}(f)| +|\widetilde{\rm bII}^{4,8}(f)| 
+|\widetilde{\rm bII}^{13}_o(f)| +|\widetilde{\rm bII}^{22}_e(f)| 
\\
+|\widetilde{\rm bII}^{23}_e(f)| +|\widetilde{\rm bII}^{24}(f)| 
+|\widetilde{\rm bII}^b_e(f)| +|\widetilde{\rm bII}^f_e(f)|$, 
\item
$|\widetilde{\rm bII}^{2,5}(f)| +|\widetilde{\rm bII}^{3,5}(f)| 
+|\widetilde{\rm bII}^{4,5}(f)| +|\widetilde{\rm bII}^{5,6}(f)| 
+|\widetilde{\rm bII}^{5,8}(f)| +|\widetilde{\rm bII}^{15}(f)| 
\\
+|\widetilde{\rm bII}^{23}_o(f)| +|\widetilde{\rm bII}^{25}(f)|
+|\widetilde{\rm bII}^{30}_o(f)| +|\widetilde{\rm bII}^{38}_o(f)| 
+|\widetilde{\rm bII}^{e}_o(f)|$, 
\item
$|\widetilde{\rm bII}^{2,5}(f)| +|\widetilde{\rm bII}^{3,5}(f)| 
+|\widetilde{\rm bII}^{4,5}(f)| +|\widetilde{\rm bII}^{5,6}(f)| 
+|\widetilde{\rm bII}^{5,8}(f)| +|\widetilde{\rm bII}^{15}(f)| 
\\
+|\widetilde{\rm bII}^{23}_e(f)| +|\widetilde{\rm bII}^{25}(f)|
+|\widetilde{\rm bII}^{30}_e(f)| +|\widetilde{\rm bII}^{38}_e(f)| 
+|\widetilde{\rm bII}^{e}_e(f)|$, 
\item
$|\widetilde{\rm bII}^{2,6}(f)| +|\widetilde{\rm bII}^{3,6}(f)| 
+|\widetilde{\rm bII}^{4,6}(f)| +|\widetilde{\rm bII}^{6,8}(f)| 
+|\widetilde{\rm bII}^{c}_e(f)| +|\widetilde{\rm bII}^{d}_o(f)| 
\\
+|\widetilde{\rm bII}^{e}_e(f)| +|\widetilde{\rm bII}^{f}_e(f)|$, 
\item
$|\widetilde{\rm bII}^{2,6}(f)| +|\widetilde{\rm bII}^{3,6}(f)| 
+|\widetilde{\rm bII}^{4,6}(f)| +|\widetilde{\rm bII}^{6,8}(f)| 
+|\widetilde{\rm bII}^{c}_o(f)| +|\widetilde{\rm bII}^{d}_e(f)| 
\\
+|\widetilde{\rm bII}^{e}_o(f)| +|\widetilde{\rm bII}^{f}_o(f)|$, 
\item
$|\widetilde{\rm bII}^{2,7}(f)| +|\widetilde{\rm bII}^{3,7}(f)| 
+|\widetilde{\rm bII}^{4,7}(f)| +|\widetilde{\rm bII}^{6,7}(f)| 
+|\widetilde{\rm bII}^{7,8}(f)| +|\widetilde{\rm bII}^{22}(f)| 
\\
+|\widetilde{\rm bII}^{23}_e(f)| +|\widetilde{\rm bII}^{d}_o(f)| 
+|\widetilde{\rm bII}^{f}_o(f)|$, 
\item
$|\widetilde{\rm bII}^{2,7}(f)| +|\widetilde{\rm bII}^{3,7}(f)| 
+|\widetilde{\rm bII}^{4,7}(f)| +|\widetilde{\rm bII}^{6,7}(f)| 
+|\widetilde{\rm bII}^{7,8}(f)| +|\widetilde{\rm bII}^{22}(f)| 
\\
+|\widetilde{\rm bII}^{23}_o(f)| +|\widetilde{\rm bII}^{d}_e(f)| 
+|\widetilde{\rm bII}^{f}_e(f)|$, 
\item
$|\widetilde{\rm bII}^{2,8}(f)| +|\widetilde{\rm bII}^{3,8}(f)| 
+|\widetilde{\rm bII}^{4,8}(f)| +|\widetilde{\rm bII}^{6,8}(f)| 
+|\widetilde{\rm bII}^{23}_o(f)| +|\widetilde{\rm bII}^{24}(f)| 
\\
+|\widetilde{\rm bII}^{c}_o(f)| +|\widetilde{\rm bII}^{e}_o(f)|$, 
\item
$|\widetilde{\rm bII}^{2,8}(f)| +|\widetilde{\rm bII}^{3,8}(f)| 
+|\widetilde{\rm bII}^{4,8}(f)| +|\widetilde{\rm bII}^{6,8}(f)| 
+|\widetilde{\rm bII}^{23}_e(f)| +|\widetilde{\rm bII}^{24}(f)| 
\\
+|\widetilde{\rm bII}^{c}_e(f)| +|\widetilde{\rm bII}^{e}_e(f)|$, 
\item
$|\widetilde{\rm bII}^{2,9}(f)| +|\widetilde{\rm bII}^{3,9}(f)| 
+|\widetilde{\rm bII}^{4,9}(f)| +|\widetilde{\rm bII}^{6,9}(f)| 
+|\widetilde{\rm bII}^{8,9}(f)| +|\widetilde{\rm bII}^{27}(f)| 
\\
+|\widetilde{\rm bII}^{35}_e(f)| +|\widetilde{\rm bII}^{37}_o(f)|$, 
\item
$|\widetilde{\rm bII}^{2,9}(f)| +|\widetilde{\rm bII}^{3,9}(f)| 
+|\widetilde{\rm bII}^{4,9}(f)| +|\widetilde{\rm bII}^{6,9}(f)| 
+|\widetilde{\rm bII}^{8,9}(f)| +|\widetilde{\rm bII}^{27}(f)| 
\\
+|\widetilde{\rm bII}^{35}_o(f)| +|\widetilde{\rm bII}^{37}_e(f)|$, 
\item
$|\widetilde{\rm bII}^{2,10}(f)| +|\widetilde{\rm bII}^{3,10}(f)| 
+|\widetilde{\rm bII}^{4,10}(f)| +|\widetilde{\rm bII}^{6,10}(f)| 
+|\widetilde{\rm bII}^{8,10}(f)| +|\widetilde{\rm bII}^{30}_e(f)| 
\\
+|\widetilde{\rm bII}^{32}(f)| +|\widetilde{\rm bII}^{33}(f)| 
+|\widetilde{\rm bII}^{35}_o(f)| +|\widetilde{\rm bII}^{37}_o(f)| 
+|\widetilde{\rm bII}^{38}_o(f)| +|\widetilde{\rm bII}^{39}(f)|$, 
\item
$|\widetilde{\rm bII}^{2,10}(f)| +|\widetilde{\rm bII}^{3,10}(f)| 
+|\widetilde{\rm bII}^{4,10}(f)| +|\widetilde{\rm bII}^{6,10}(f)| 
+|\widetilde{\rm bII}^{8,10}(f)| +|\widetilde{\rm bII}^{30}_o(f)| 
\\
+|\widetilde{\rm bII}^{32}(f)| +|\widetilde{\rm bII}^{33}(f)| 
+|\widetilde{\rm bII}^{35}_e(f)| +|\widetilde{\rm bII}^{37}_e(f)| 
+|\widetilde{\rm bII}^{38}_e(f)| +|\widetilde{\rm bII}^{39}(f)|$.
\end{enumerate}
\end{prop}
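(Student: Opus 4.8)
The plan is to derive each of the eighteen congruences from the handshake lemma of graph theory, along the lines indicated just before the statement. The formulae come in nine pairs: formulae $(2k-1)$ and $(2k)$ are attached to the codimension-$1$ fiber $\widetilde{\mathrm{bI}}^{k+1}$ for $k=1,\dots,9$, and within each pair the two formulae correspond to the two parities $o$ and $e$. So fix $\mu\in\{2,\dots,10\}$ and $\epsilon\in\{o,e\}$ and put $G=\overline{\widetilde{\mathrm{bI}}^{\mu}_{\epsilon}(f)}$. As recalled above, $G$ is a finite graph embedded in $N$: its edges are the connected components of $\widetilde{\mathrm{bI}}^{\mu}_{\epsilon}(f)$, and its vertices are the finitely many points of $N$ over which lies a codimension-$2$ fiber and at which at least one of those edges terminates. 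Such a vertex is a point $q_{0}$ over which the fiber is the disjoint union of a codimension-$2$ fiber $\mathcal{G}$ with some number $r$ of regular ($\widetilde{\mathrm{b0}}^{0}$- and $\widetilde{\mathrm{b0}}^{1}$-) components, and its valency in $G$ depends only on the $C^{\infty}$-equivalence class $\mathcal{G}$ and on the parity $\delta\in\{o,e\}$ of $r$ (and of course on $\mu,\epsilon$); denote it $v_{\mu,\epsilon}(\mathcal{G},\delta)$. The handshake lemma then says that $G$ has an even number of odd-valency vertices, i.e.
\[
\sum_{\substack{\mathcal{G},\,\delta\\ v_{\mu,\epsilon}(\mathcal{G},\delta)\ \mathrm{odd}}}\bigl|\,\widetilde{\mathcal{G}}_{\delta}(f)\,\bigr|\ \equiv\ 0\pmod{2},
\]
the sum running over $C^{\infty}$-equivalence classes of codimension-$2$ fibers and over $\delta$ (only finitely many terms are nonzero, since $v_{\mu,\epsilon}(\mathcal{G},\delta)=0$ unless a $\widetilde{\mathrm{bI}}^{\mu}_{\epsilon}$-edge limits to a point with fiber a disjoint union of $\mathcal{G}$ with regular components). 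Thus everything reduces to computing the valencies $v_{\mu,\epsilon}(\mathcal{G},\delta)$ and reading the formulae off.

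The heart of the proof is therefore the following adjacency analysis. For each codimension-$2$ fiber type — the disconnected $\widetilde{\mathrm{bII}}^{\mu,\nu}$ with $2\le\mu\le\nu\le10$, the connected $\widetilde{\mathrm{bII}}^{11},\dots,\widetilde{\mathrm{bII}}^{39}$, and the boundary ones $\widetilde{\mathrm{bII}}^{a},\dots,\widetilde{\mathrm{bII}}^{f}$ — I would take its central level set from Figures~\ref{fibersbM2toR}--\ref{fibersbM3toR2}, realize the associated multi-germ by the normal forms of Proposition~\ref{prop:stable}, and describe the codimension-$1$ stratum in a small neighborhood of the corresponding point $q_{0}\in N$ by tracing the level sets over a small loop around $q_{0}$, using the local degenerations of Lemma~\ref{lemma2.3} (Figure~\ref{NearbyFibers}). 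For a connected $\mathcal{G}$ this yields a cyclic word in the codimension-$1$ types $\widetilde{\mathrm{bI}}^{*}$ separated by regular arcs; for $\widetilde{\mathrm{bII}}^{\mu,\nu}$ it simply yields two smooth arcs through $q_{0}$ crossing transversally, one carrying $\widetilde{\mathrm{bI}}^{\mu}$- and the other $\widetilde{\mathrm{bI}}^{\nu}$-fibers. Separately, from Lemma~\ref{lemma2.3} I would record once and for all the parity $t_{\mu}\in\{0,1\}$ of the change in the number of regular components across the codimension-$1$ stratum of type $\widetilde{\mathrm{bI}}^{\mu}$. Given these data, $v_{\mu,\epsilon}(\mathcal{G},\delta)$ is the number of branches of the codimension-$1$ stratum at $q_{0}$ carrying $\widetilde{\mathrm{bI}}^{\mu}$-fibers whose accompanying regular part has parity $\epsilon$ — and the parity along such a branch equals $\delta$ shifted by the $t_{\nu}$ of whatever other strata are crossed on the way from $q_{0}$ to that branch.

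For the disconnected fibers the count is pure bookkeeping: at $\widetilde{\mathrm{bII}}^{\mu,\nu}$ the two $\widetilde{\mathrm{bI}}^{\mu}$-branches make up a single embedded arc through $q_{0}$ (along which the $\widetilde{\mathrm{bI}}^{\nu}$-summand is resolved), so $v_{\mu,\epsilon}(\widetilde{\mathrm{bII}}^{\mu,\nu},\delta)=1$ exactly when $t_{\nu}=1$ and is $0$ or $2$ otherwise, independently of $\delta$; this is why the $\widetilde{\mathrm{bII}}^{\mu,\nu}$-terms occur in the formulae without parity subscripts, and why, in the formula attached to $\widetilde{\mathrm{bI}}^{\mu}$, exactly those $\nu$ with $t_{\nu}=1$ appear. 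The genuinely delicate part, which I expect to be the main obstacle, is the adjacency analysis of the fibers meeting $\partial M\cap S(f)$ — the boundary fibers $\widetilde{\mathrm{bII}}^{a},\dots,\widetilde{\mathrm{bII}}^{f}$ and the $\widetilde{\mathrm{bII}}^{\mu}$ of large index: here one must first build the correct local model from the ``generic deformations'' of the relevant function germs alluded to after Lemma~\ref{lemma2.2}, and then keep careful track of how the definite and indefinite $\Sigma^{2,0}_{1,0}$ points break up and of the parity of the regular components created near the boundary. Once the full table of valencies $v_{\mu,\epsilon}(\mathcal{G},\delta)$ is assembled, formulae $(1)$--$(18)$ are precisely the handshake congruences above, in order, with $\mu$ running from $2$ to $10$ and, for each $\mu$, the two values of $\epsilon$.
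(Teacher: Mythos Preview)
Your proposal is correct and follows exactly the approach the paper takes: the paper's entire proof is the single sentence that the closure of $\widetilde{\mathrm{bI}}^{\mu}_{\epsilon}(f)$ is a finite graph in $N$ whose vertices are the codimension-$2$ points, and that the handshake lemma yields the formulae. Your write-up simply spells out the adjacency analysis (the valency computation) that the paper leaves implicit; indeed, the coboundary formulae for $\delta_1(\widetilde{\mathrm{bI}}^{\mu}_{\epsilon})$ listed later in \S4 are precisely the record of odd-valency vertex types you describe, and they match the eighteen sums term by term.
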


By eliminating the terms of the forms 
$\mathcal{F}_o(f)$ and $\mathcal{F}_e(f)$, we obtain the following. 

\begin{cor}
Let $f\co M \to N$ be a $C^\infty$ stable map of a compact 
$3$-manifold $M$ with boundary into a
surface $N$ without boundary. 
Then, the following numbers are always even:
\begin{enumerate}
\item
$|\widetilde{\rm bII}^{a}(f)| +|\widetilde{\rm bII}^{b}(f)| 
+|\widetilde{\rm bII}^{d}(f)|$,
\item
$|\widetilde{\rm bII}^{13}(f)| +|\widetilde{\rm bII}^{22}(f)| 
+|\widetilde{\rm bII}^{a}(f)|$,
\item
$|\widetilde{\rm bII}^{13}(f)| +|\widetilde{\rm bII}^{22}(f)| 
+|\widetilde{\rm bII}^{23}(f)| +|\widetilde{\rm bII}^{b}(f)| 
+|\widetilde{\rm bII}^{f}(f)|$,
\item
$|\widetilde{\rm bII}^{23}(f)| +|\widetilde{\rm bII}^{30}(f)| 
+|\widetilde{\rm bII}^{38}(f)| +|\widetilde{\rm bII}^{e}(f)|$,
\item
$|\widetilde{\rm bII}^{c}(f)| +|\widetilde{\rm bII}^{d}(f)| 
+|\widetilde{\rm bII}^{e}(f)| +|\widetilde{\rm bII}^{f}(f)|$,
\item
$|\widetilde{\rm bII}^{23}(f)| +|\widetilde{\rm bII}^{d}(f)| 
+|\widetilde{\rm bII}^{f}(f)|$,
\item
$|\widetilde{\rm bII}^{23}(f)| +|\widetilde{\rm bII}^{c}(f)| 
+|\widetilde{\rm bII}^{e}(f)|$,
\item
$|\widetilde{\rm bII}^{35}(f)| +|\widetilde{\rm bII}^{37}(f)|$,
\item
$|\widetilde{\rm bII}^{30}(f)| +|\widetilde{\rm bII}^{35}(f)| 
+|\widetilde{\rm bII}^{37}(f)| +|\widetilde{\rm bII}^{38}(f)|$.
\end{enumerate}
\label{Co-existenceB3to2}
\end{cor}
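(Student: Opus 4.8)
The plan is to obtain each of the nine parity assertions of Corollary~\ref{Co-existenceB3to2} by adding together, two at a time, the eighteen parity assertions of the preceding Proposition, and then discarding everything that occurs an even number of times. The only input beyond the Proposition itself is the tautological identity
\[
|\widetilde{\mathcal{F}}_o(f)| + |\widetilde{\mathcal{F}}_e(f)| = |\widetilde{\mathcal{F}}(f)|
\]
valid for every codimension-two $C^\infty$ equivalence class $\widetilde{\mathcal{F}}$ of singular fibers, which holds because, by definition, $\widetilde{\mathcal{F}}_o(f)$ and $\widetilde{\mathcal{F}}_e(f)$ partition $\widetilde{\mathcal{F}}(f)$ according to the parity of the number of regular-fiber components over the point. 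All the arithmetic will be done modulo $2$.

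Concretely, I would pair the $(2k-1)$-st and $(2k)$-th formulae of the Proposition to produce the $k$-th formula of the Corollary, for $k = 1,\dots,9$. In each such pair one checks two things. First, every term of the form $|\widetilde{\mathrm{bII}}^{\ast}(f)|$ carrying \emph{no} $o$ or $e$ subscript — for example $|\widetilde{\mathrm{bII}}^{2,3}(f)|$ in the pair $(1),(2)$, the term $|\widetilde{\mathrm{bII}}^{24}(f)|$ in the pairs $(5),(6)$ and $(13),(14)$, or $|\widetilde{\mathrm{bII}}^{22}(f)|$ in the pair $(11),(12)$ — appears identically in both members of the pair, hence contributes $0 \bmod 2$. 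Second, every term carrying a subscript appears with subscript $o$ in one member and with subscript $e$ in the other; adding the two and applying the displayed identity replaces the pair $|\widetilde{\mathcal{F}}_o(f)|,|\widetilde{\mathcal{F}}_e(f)|$ by the unsubscripted cardinality $|\widetilde{\mathcal{F}}(f)|$. What survives is precisely the sum listed in the $k$-th item of the Corollary, and it is even because it equals a sum of two even integers (the two Proposition formulae) minus an even number.

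Since the Proposition is already established, there is no analytic or geometric work left; the argument is entirely combinatorial bookkeeping. The one point that must be verified with care — and the only real (if mild) obstacle — is that the eighteen formulae genuinely organize into the nine $o/e$-complementary pairs described above: one must confirm that within each consecutive pair the subscripted symbols are the same fiber classes differing only in the $o/e$ decoration, and that all remaining symbols match term for term. Once this matching is confirmed, each line of Corollary~\ref{Co-existenceB3to2} follows at once.
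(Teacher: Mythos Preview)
Your proposal is correct and is exactly the approach the paper takes: the paper's entire proof is the one-line remark ``By eliminating the terms of the forms $\mathcal{F}_o(f)$ and $\mathcal{F}_e(f)$, we obtain the following,'' which amounts precisely to adding the $(2k-1)$-st and $(2k)$-th items of the Proposition and using $|\widetilde{\mathcal{F}}_o(f)|+|\widetilde{\mathcal{F}}_e(f)|=|\widetilde{\mathcal{F}}(f)|$. Your description spells out this bookkeeping in more detail than the paper does, but the content is identical.
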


\begin{rem}
The numbers in Corollary~$\ref{Co-existenceB3to2}$ are all even if and only if 
the following seven are all even:
\begin{enumerate}
\item
$|\widetilde{\rm bII}^{a}(f)| +|\widetilde{\rm bII}^{b}(f)| 
+|\widetilde{\rm bII}^{d}(f)|$,
\item
$|\widetilde{\rm bII}^{13}(f)| +|\widetilde{\rm bII}^{22}(f)|
+|\widetilde{\rm bII}^{a}(f)|$,
\item
$|\widetilde{\rm bII}^{23}(f)| +|\widetilde{\rm bII}^{e}(f)|$,
\item
$|\widetilde{\rm bII}^{c}(f)|$,
\item
$|\widetilde{\rm bII}^{d}(f)| +|\widetilde{\rm bII}^{e}(f)|
+|\widetilde{\rm bII}^{f}(f)|$,
\item
$|\widetilde{\rm bII}^{35}(f)| +|\widetilde{\rm bII}^{37}(f)|$,
\item
$|\widetilde{\rm bII}^{30}(f)| +|\widetilde{\rm bII}^{38}(f)|$.
\end{enumerate}
\end{rem}

\begin{rem}
By using the same method, we obtain similar co-existence results 
for singular fibers of a
stable Morse function $f\co V \to W$ of 
a compact surface $V$ with boundary into $W = \R$ or $S^1$:
\[|\widetilde{\rm bI}^2(f)| +|\widetilde{\rm bI}^3(f)| 
+|\widetilde{\rm bI}^4(f)| +|\widetilde{\rm bI}^6(f)| 
+|\widetilde{\rm bI}^8(f)| 
\equiv 0 \mod{2},
\]
where we are using the notation for the relevant
fibers in the sense of Remark~\ref{rem:Morse}.
\label{coexistence2to1_0}
\end{rem}

For a stable Morse function $f\co V \to W$ as above and 
a $C^\infty$ equivalence class $\widetilde{\mathcal{F}}$ of 
singular fibers, 
denote by $\widetilde{\mathcal{F}}_{o, o}(f)$ 
(or $\widetilde{\mathcal{F}}_{o, e}(f)$,
$\widetilde{\mathcal{F}}_{e, o}(f)$,
$\widetilde{\mathcal{F}}_{e, e}(f)$) 
the subset of $\widetilde{\mathcal{F}}(f)$ which consists of the 
points $q \in W$ such that the numbers of fibers of 
types $\widetilde{\rm b0}^0$ and 
$\widetilde{\rm b0}^1$ are both odd
(resp.\ odd and even, even and odd, or both even).
Then, by applying the same method to
the graph obtained as the closure of 
$\widetilde{\mathcal{F}}_{\ast, \star}(f)$ ($\ast, \star =o$ or $e$),
we obtain the following. 

\begin{lem}
Let $f\co V \to W$ be a stable Morse function on a compact 
surface $V$ with boundary into $W = \R$ or $S^1$. 
Then, the following numbers are always even:
\begin{enumerate}
\item
$|\widetilde{\rm bI}^2(f)| +|\widetilde{\rm bI}^3(f)| 
+|\widetilde{\rm bI}^4(f)| +|\widetilde{\rm bI}^7(f)|$, 
\item
$|\widetilde{\rm bI}^6(f)| +|\widetilde{\rm bI}^7(f)| 
+|\widetilde{\rm bI}^8(f)|$.
\end{enumerate}
\label{coexistence2to1}
\end{lem}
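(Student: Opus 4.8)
The plan is to run the handshake/parity argument of the previous results with a one--dimensional target, by tracking two $\Z/2$--valued invariants of the regular level sets of $f$ as the value moves along $W$.

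First I would fix the combinatorial set--up. Since $V$ is compact and $f$ is a stable Morse function, the set $\{q_1,\dots,q_r\}=f(S(f)\cup S(f|_{\partial V}))\subset W$ of singular values is finite and, by the global condition in Remark~\ref{rem:Morse}, its points are pairwise distinct; over each $q_j$ lies a fiber which, by Remark~\ref{rem:Morse} and Proposition~\ref{ClassifyFibers}, is $C^\infty$ equivalent to the disjoint union of exactly one copy of a codimension--one fiber of some type $\widetilde{\mathrm{bI}}^{\mu_j}$ with finitely many copies of $\widetilde{\mathrm{b0}}^0$ (circles) and $\widetilde{\mathrm{b0}}^1$ (arcs). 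Over each connected component $U$ of $W\setminus\{q_1,\dots,q_r\}$ both $f$ and $f|_{\partial V}$ are submersions, so by the relative version of Ehresmann's fibration theorem $(f,f|_{\partial V})\colon(f^{-1}(U),\,f^{-1}(U)\cap\partial V)\to U$ is a trivial bundle pair; in particular, if for a regular value $q$ we let $c(q)$ and $a(q)$ be the numbers of circle components and of arc components of $f^{-1}(q)$ reduced modulo $2$, then both $c$ and $a$ are locally constant on the set of regular values.

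The main step is to compute the jump of the pair $(c,a)$ across each singular value, which I would do type by type from the local degenerations of level sets in Figure~\ref{NearbyFibers} (equivalently, by a direct computation with the normal forms of Remark~\ref{rem:Morse}) together with the global pictures of central level sets in Figure~\ref{fibersbM2toR}. The number of boundary points of the level set, that is, the number of points of $f^{-1}(q)\cap\partial V$, changes only when $q$ crosses a critical value of $f|_{\partial V}$, i.e.\ when the singular point on the fiber lies in $S(f|_{\partial V})$; there are exactly three such codimension--one fibers, and each changes $a$ by one, so $a$ changes precisely at these three fibers, which must therefore be $\widetilde{\mathrm{bI}}^6,\widetilde{\mathrm{bI}}^7,\widetilde{\mathrm{bI}}^8$. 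Inspecting the remaining fibers shows that $c$ changes precisely at $\widetilde{\mathrm{bI}}^2,\widetilde{\mathrm{bI}}^3,\widetilde{\mathrm{bI}}^4$ (a circle being created or destroyed, two circles merging, or a circle being absorbed into an arc) and at $\widetilde{\mathrm{bI}}^7$ (a circle becoming tangent to $\partial V$ and re--emerging as an arc, so that $\widetilde{\mathrm{bI}}^7$ is the one fiber that changes both $c$ and $a$), while all other codimension--one fibers change neither. As a consistency check, the symmetric difference $\{2,3,4,7\}\,\triangle\,\{6,7,8\}=\{2,3,4,6,8\}$ is exactly the index set of the co-existence formula of Remark~\ref{coexistence2to1_0}, which is the one governed by the parity $c+a$ of the total number of components.

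It then remains to integrate around $W$. If $W=S^1$, traversing it once returns $c$ and $a$ to their initial values; if $W=\R$, then $f^{-1}(q)=\varnothing$, hence $c=a=0$, for $|q|$ large, because $V$ is compact. In either case the number of singular values at which $c$ (resp.\ $a$) changes parity is even, and grouping those values by the type of the singular fiber over them gives
\[
|\widetilde{\mathrm{bI}}^2(f)|+|\widetilde{\mathrm{bI}}^3(f)|+|\widetilde{\mathrm{bI}}^4(f)|+|\widetilde{\mathrm{bI}}^7(f)|\equiv 0\pmod 2
\]
from $c$ and
\[
|\widetilde{\mathrm{bI}}^6(f)|+|\widetilde{\mathrm{bI}}^7(f)|+|\widetilde{\mathrm{bI}}^8(f)|\equiv 0\pmod 2
\]
from $a$. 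Equivalently, this is the handshake lemma applied to the finite graph obtained in $W$ as the closure of $\widetilde{\mathrm{b0}}_{\ast,\star}(f)$, summed over the four parity pairs $(\ast,\star)$ with suitable $\Z/2$--coefficients. The one genuinely laborious ingredient is the type--by--type analysis of the second step: one must keep careful track of which components of a level set are circles and which are arcs, and of how they appear, disappear, merge, split or change type across each of the codimension--one fibers, the subtlest cases being the boundary fibers where the level set becomes tangent to $\partial V$; everything else is formal.
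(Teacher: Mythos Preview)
Your proof is correct and follows essentially the same approach as the paper. The paper states only that the lemma is obtained ``by applying the same method to the graph obtained as the closure of $\widetilde{\mathcal{F}}_{\ast,\star}(f)$ ($\ast,\star = o$ or $e$)'', i.e.\ the handshake lemma with the parities of the $\widetilde{\mathrm{b0}}^0$--components and the $\widetilde{\mathrm{b0}}^1$--components tracked separately; your functions $c$ and $a$ are exactly these parities, and your traversal argument is the handshake lemma in a one--dimensional target, as you yourself observe at the end.
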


\section{Universal complex}

In this section we review the theory of 
universal complex of singular fibers and then
study the universal complex for stable maps of manifolds with boundary.
As to the general theory of universal complex
of singular fibers, refer to \cite[Chaps.~7 and 8]{Saeki04}.
 
Throughout this section, for a $C^\infty$ map $M \to N$, 
we assume that $M$ is an $m$-dimensional manifold 
which is not necessarily closed, and that 
$N$ is an $n$-dimensional manifold without boundary. 

In what follows, a \emph{codimension} of a smooth map
$f \co M \to N$ refers to the difference $\dim{N} -
\dim{M} \in \Z$.
To construct the universal complex of singular fibers 
of $C^\infty$ maps, we
fix an integer $\ell \in \Z$ for the codimension
of the maps, and consider
the following:
\begin{enumerate}
\item
a set $\tau$ of
fibers of proper Thom maps\footnote{
A \emph{Thom map} $M \to N$ is a $C^\infty$
stratified map with respect to Whitney 
regular stratifications of $M$ and $N$ such that 
it is a submersion on each stratum and 
satisfies certain regularity conditions.
See \cite[\S2]{Damon77}, \cite[\S2.5]{duPlessisWall95}, \cite{GWDL},
\cite[Part I, \S1]{Saeki04}, etc.\ for more details.}
of codimension $\ell$, and 
\item
an equivalence relation $\rho$ among the fibers in $\tau$. 
\end{enumerate}
We further assume that
the set $\tau$ and the relation $\rho$ satisfy 
the following conditions.
\begin{itemize}
\item[$(a)$] 
The set $\tau$ is closed under adjacency relation. 
That is, if a fiber is in $\tau$, then so are all nearby fibers. 
\item[$(b)$] 
The relation $\rho$ is weaker than the 
$C^0$ equivalence: 
each $\rho$-class is a union of $C^0$ equivalence classes. 
\item[$(c)$]
Let $f_i\co M_i \to N_i$ be proper 
Thom maps and $q_i \in N_i$, $i = 0, 1$. 
Suppose that 
the fibers over $q_0$ and $q_1$ are 
in $\tau$ and that they are equivalent with respect to $\rho$. 
Then, 
there exist open neighborhoods $U_i$ of $q_i$ in $N_i$, $i= 0, 1$, 
and a homeomorphism $\varphi \co U_0 \to U_1$ satisfying 
$\varphi(q_0)= q_1$ and 
$\varphi(U_0 \cap \mathcal{F}(f_0)) = 
U_1 \cap \mathcal{F}(f_1)$, 
for each $\rho$-class $\mathcal{F}$,
where for a proper Thom map $f \co M \to N$ of codimension $\ell$, we set
$$\mathcal{F}(f) = \{q \in N\,|\, \mbox{the fiber of $f$ over $q$
belongs to the class $\mathcal{F}$}\}.$$ 
\end{itemize}
In particular, the above conditions imply that for each proper 
Thom map $f \co M \to N$ and each $\rho$-class $\mathcal{F}$, 
$\mathcal{F}(f)$ 
is a $C^0$ submanifold of constant codimension 
unless it is not empty.
The \emph{codimension} of $\mathcal{F}$ is defined 
to be that of $\mathcal{F}(f)$ in $N$, and
is denoted by $\kappa(\mathcal{F})$. 


We call a proper Thom map $f\co M \to N$ 
a \emph{$\tau$-map} 
if all of its fibers are in $\tau$. 

For each $\kappa \in \Z$, let $C^{\kappa}(\tau, \rho)$ be the 
formal $\Z_2$-vector space 
spanned by the $\rho$-classes of codimension $\kappa$ in $\tau$. 
If there are no such fibers, then set 
$C^{\kappa}(\tau, \rho) = 0$.
Note that $C^{\kappa}(\tau, \rho)$ 
may possibly be of infinite dimension
in general.

Define the $\Z_2$-linear map 
$\delta_{\kappa} \co
C^{\kappa}(\tau,\rho) 
\to C^{\kappa + 1}(\tau,\rho)$ by 
\[
\delta_{\kappa}(\mathcal{F}) = 
\sum_{\kappa(\mathcal{G}) = \kappa + 1} 
n_{\mathcal{F}}(\mathcal{G}) \,
\mathcal{G}, 
\]
where 
$n_{\mathcal{F}}(\mathcal{G}) \in \Z_2$ 
is the number modulo two 
of the local components of $\mathcal{F}(f)$ 
which are locally adjacent to a component of
$\mathcal{G}(f)$ for a $\tau$-map $f$ satisfying 
$\mathcal{G}(f) \ne \emptyset$
(for details, see \cite{Saeki04}). 
Note that the coefficient 
$n_{\mathcal{F}}(\mathcal{G}) \in \Z_2$ 
is well-defined 
by condition $(c)$ for $\rho$. 

Since we see easily that
$\delta_{\kappa + 1} \circ \delta_{\kappa} = 0$ holds, 
we obtain the cochain complex 
\[
\mathcal{C}(\tau,\rho) = 
(C^{\kappa}(\tau,\rho), 
\delta_{\kappa})_{\kappa}. 
\] 
We call the resulting cochain complex 
the \emph{universal complex of singular fibers for proper 
$\tau$-maps with respect to the equivalence relation $\rho$}, 
and denote its cohomology group of dimension $\kappa$ by 
$H^{\kappa}(\tau,\rho)$. 

Let
$$
c = \sum_{\kappa(\mathcal{F}) = \kappa}n_{\mathcal{F}}\,\mathcal{F},
\quad n_{\mathcal{F}} \in \Z_2,
$$
be a $\kappa$-dimensional cochain of $\mathcal{C}(\tau,\rho)$. 
For a $\tau$-map 
$f\co M \to N$, 
denote by $c(f)$ the closure of the set of points 
$q \in N$ such that the fiber over $q$ is in 
$\mathcal{F}$ with $n_{\mathcal{F}} \ne 0$. 

If $c$ is a cocycle, then we can show that
$c(f)$ is a $\Z_2$-cycle of 
closed support of codimension $\kappa$ in $N$. 
In addition, 
if $M$ is compact and $\kappa > 0$, then
$c(f)$ is a $\Z_2$-cycle in the usual sense. 

\begin{lem}\label{cocycle_cohomology}
Let $c$ and $c'$ be $\kappa$-dimensional 
cocycles of $\mathcal{C}(\tau, \rho)$. 
If they are cohomologous, then 
the $\Z_2$-cycles $c(f)$ and $c'(f)$ 
are $\Z_2$-homologous in $N$ for 
each $\tau$-map $f\co M \to N$. 
\end{lem}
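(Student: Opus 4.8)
The plan is to reduce the statement to the single geometric compatibility formula
\[
\partial\bigl(b(f)\bigr) = (\delta_{\kappa-1}b)(f)
\]
valid for every $(\kappa-1)$-dimensional cochain $b$ of $\mathcal{C}(\tau,\rho)$ and every $\tau$-map $f\co M \to N$, where $\partial$ denotes the boundary operator on $\Z_2$-chains of closed support in $N$ (and the ordinary boundary when $M$ is compact and $\kappa>0$). Granting this, the lemma is immediate: if $c$ and $c'$ are cohomologous, then over $\Z_2$ we have $c+c' = \delta_{\kappa-1}b$ for some $(\kappa-1)$-cochain $b$, and hence
\[
c(f)+c'(f) = (\delta_{\kappa-1}b)(f) = \partial\bigl(b(f)\bigr),
\]
so $c(f)$ and $c'(f)$ bound together in $N$. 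When $M$ is compact, $f(M)$ is compact and $b(f)$ is a genuine finite $\Z_2$-chain, so this is $\Z_2$-homology in the usual sense; in general one works with closed supports.

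First I would record that, since $f$ is a proper Thom map, $N$ is stratified by the $\rho$-classes of its fibers, each $\mathcal{F}(f)$ being a $C^0$ submanifold of codimension $\kappa(\mathcal{F})$ whose closure is the union of $\mathcal{F}(f)$ with strata of strictly larger codimension, and which therefore carries a well-defined fundamental $\Z_2$-class of closed support. Writing $b = \sum_{\kappa(\mathcal{E}) = \kappa-1} n_{\mathcal{E}}\,\mathcal{E}$, the set $b(f)$ is the support of the $\Z_2$-chain $\sum_{n_{\mathcal{E}}=1}[\overline{\mathcal{E}(f)}]$ of dimension $n-\kappa+1$, so $\partial(b(f))$ is a $\Z_2$-chain of dimension $n-\kappa$ supported on the union of the strata of codimension $\geq\kappa$; by the dimension count it is carried by the closures of the codimension-$\kappa$ strata, i.e.\ it has the form $\sum_{\kappa(\mathcal{F})=\kappa} m_{\mathcal{F}}\,[\overline{\mathcal{F}(f)}]$ with $m_{\mathcal{F}} \in \Z_2$.

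The heart of the matter is the local computation of the coefficient $m_{\mathcal{F}}$. I would fix a $\tau$-map $f$ with $\mathcal{F}(f)\neq\emptyset$, choose a generic point $q\in\mathcal{F}(f)$ and a small disk $D\subset N$ of dimension $\kappa$ meeting $\mathcal{F}(f)$ transversally at the single point $q$. Using condition $(c)$ for $\rho$ together with the local triviality of a Thom map along the strata of $N$, the germ at $q$ of the stratification of $D$ depends only on the class $\mathcal{F}$; in particular the number of local components of each $\mathcal{E}(f)$ that are adjacent to $\mathcal{F}(f)$ at $q$ equals $n_{\mathcal{E}}(\mathcal{F})$ modulo two, by the very definition of $\delta_{\kappa-1}$. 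Consequently the chain $b(f)$ near $q$ is a sum of half-disk-like sheets, with $\sum_{\mathcal{E}} n_{\mathcal{E}}\, n_{\mathcal{E}}(\mathcal{F})$ of them (mod two) issuing from $q$, and its boundary contributes $\bigl(\sum_{\mathcal{E}} n_{\mathcal{E}}\, n_{\mathcal{E}}(\mathcal{F})\bigr)[q]$ there; this coefficient is exactly that of $\mathcal{F}$ in $\delta_{\kappa-1}b$. Summing over all codimension-$\kappa$ classes yields $\partial(b(f)) = (\delta_{\kappa-1}b)(f)$ and completes the proof.

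I expect the main obstacle to be the rigorous version of this last local step: making precise that each $\overline{\mathcal{F}(f)}$ carries a Borel--Moore fundamental $\Z_2$-class, that the normal-slice picture at $q$ is genuinely model-independent (here conditions $(b)$ and $(c)$ for $\rho$ and the local triviality inherent in the Thom-map hypothesis are essential), and that no contribution to $\partial(b(f))$ arises from strata of codimension greater than $\kappa$. All of this is part of the general theory of the universal complex of singular fibers, and for the details I would invoke \cite[Chaps.~7 and 8]{Saeki04}, the present situation being a direct instance of that framework.
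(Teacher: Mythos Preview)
Your proposal is correct and follows exactly the paper's approach: the paper's proof consists of the single line ``we see easily that $c(f) - c'(f) = \partial d(f)$ holds'', which is precisely your compatibility formula $\partial(b(f)) = (\delta_{\kappa-1}b)(f)$ applied to $b=d$. You have simply supplied the local justification (via a transversal slice and condition~$(c)$) that the paper leaves implicit, together with an honest acknowledgment that the fine points belong to the general theory in \cite[Chaps.~7 and 8]{Saeki04}.
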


\begin{proof}
There exists a $(\kappa -1)$-dimensional 
cochain $d$ of $\mathcal{C}(\tau, \rho)$ 
such that 
$c - c' = \delta_{\kappa -1} d$. 
Then, we see easily that
$c(f) - c'(f) = \partial d(f)$ holds, and 
the result follows immediately.
\end{proof}

Let $[c]$ be the $\kappa$-dimensional cohomology 
class of $\mathcal{C}(\tau, \rho)$ represented
by a cochain $c$. 
For a $\tau$-map $f\co M \to N$, 
define $[c(f)] \in H^c_{n-\kappa}(N; \Z_2)$ 
to be the $\Z_2$-homology class 
represented by the $\Z_2$-cycle $c(f)$ of closed support,
where $H^c_*$ denotes the Borel--Moore homology or the
homology with closed support.
This is well-defined 
by virtue of Lemma~\ref{cocycle_cohomology}.

Define the $\Z_2$-linear map
\[
\varphi_{f}\co H^{\kappa}(\tau,\rho) 
\to H^{\kappa}(N; \Z_2)
\]
by $\varphi_{f}([c]) = [c(f)]^*$, 
where $[c(f)]^* \in H^{\kappa}(N; \Z_2)$ is the Poincar\'{e} 
dual of $[c(f)] \in H^c_{n -\kappa}(N; \Z_2)$. 

The suspension of a Thom map is introduced as follows.

\begin{dfn}\label{def:suspension}
For a proper Thom map $f\co M \to N$, 
consider the proper Thom map
$$
f \times \id_\R \co M \times \R \to N \times \R. 
$$
Note that $S(f \times \id_\R) = S(f) \times \R$ and 
$(f \times \id_\R)(S(f) \times \R) = f(S(f)) \times \R$.
We call $f\times \id_\R$ 
and the fiber of 
$f \times \id_\R$ over a point 
$(q,0) \in N \times \R$ 
the \emph{suspension} of $f$ and 
the \emph{suspension} of the fiber of 
$f$ over $q \in N$, respectively. 
\end{dfn}

Let $\tau$ be a set of fibers for proper
Thom maps of codimension $\ell$.
For a dimension pair $(m, n)$ with $n-m=\ell$,
let $\tau(m, n)$ denote the set of fibers
in $\tau$ for proper Thom maps of manifolds
of dimension $m$ into those of dimension $n$.
The equivalence relation on $\tau(m, n)$
induced by $\rho$ is denoted by $\rho_{m, n}$.

In the following, 
in addition to conditions $(a)$--$(c)$ above, we
assume the following two additional conditions.
\begin{itemize}
\item[$(d)$]
The suspension of each fiber in $\tau(m, n)$ is also 
in $\tau(m+1, n+1)$. 
\item[$(e)$]
If two fibers in $\tau(m, n)$ are equivalent with respect to 
$\rho_{m, n}$, then their suspensions are also 
equivalent with respect to $\rho_{m+1, n+1}$. 
\end{itemize}

For each $\kappa \in \Z$, the suspension induces 
the $\Z_2$-linear map
\[
s_{\kappa}\co C^{\kappa}(\tau(m+1, n+1), \rho_{m+1, n+1}) 
\to 
C^{\kappa}(\tau(m, n), \rho_{m, n}),
\]
where 
$s_{\kappa}(\mathcal{F})$ 
is the (possibly infinite) sum of all $\rho_{m, n}$-classes 
of codimension $\kappa$ 
whose suspensions are in $\mathcal{F}$.
Note that $s_{\kappa}$ is well-defined. 

\begin{lem}
The system of $\Z_2$-linear maps $\{s_{\kappa}\}$ 
defines the cochain map 
$$
\{s_{\kappa}\}\co 
\mathcal{C}(\tau(m+1, n+1), \rho_{m+1, n+1}) 
\to 
\mathcal{C}(\tau(m, n), \rho_{m, n}). 
$$
\end{lem}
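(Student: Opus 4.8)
The plan is to verify directly that the square
\[
\begin{CD}
C^{\kappa}(\tau(m+1,n+1),\rho_{m+1,n+1}) @>{\delta_\kappa}>> C^{\kappa+1}(\tau(m+1,n+1),\rho_{m+1,n+1}) \\
@V{s_\kappa}VV @VV{s_{\kappa+1}}V \\
C^{\kappa}(\tau(m,n),\rho_{m,n}) @>{\delta_\kappa}>> C^{\kappa+1}(\tau(m,n),\rho_{m,n})
\end{CD}
\]
commutes, i.e.\ that $s_{\kappa+1}\circ\delta_\kappa=\delta_\kappa\circ s_\kappa$ on each generator. Fix a $\rho_{m+1,n+1}$-class $\mathcal{F}$ of codimension $\kappa$. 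Both composites are $\Z_2$-linear combinations of $\rho_{m,n}$-classes $\mathcal{G}'$ of codimension $\kappa+1$, so it suffices to compare, for each such $\mathcal{G}'$, the coefficient of $\mathcal{G}'$ on the two sides. On the left-bottom route, $\delta_\kappa s_\kappa(\mathcal F)=\sum_{\mathcal F'}\sum_{\mathcal G'} n_{\mathcal F'}(\mathcal G')\,\mathcal G'$, the first sum over $\rho_{m,n}$-classes $\mathcal F'$ of codimension $\kappa$ whose suspension is $\mathcal F$; on the top-right route, $s_{\kappa+1}\delta_\kappa(\mathcal F)=\sum_{\mathcal G}\sum_{\mathcal G'} n_{\mathcal F}(\mathcal G)\,\mathcal G'$, the inner sum over $\rho_{m,n}$-classes $\mathcal G'$ of codimension $\kappa+1$ whose suspension is $\mathcal G$. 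So the claim reduces to the identity
\[
\sum_{\substack{\mathcal F':\ s(\mathcal F')=\mathcal F}} n_{\mathcal F'}(\mathcal G') \;=\; \sum_{\substack{\mathcal G:\ s(\mathcal G')=\mathcal G}} n_{\mathcal F}(\mathcal G) \pmod 2
\]
for every fixed $\mathcal G'$ of codimension $\kappa+1$ in $\tau(m,n)$; note the right-hand sum has at most one nonzero term, since $\mathcal G'$ has a unique suspension $\mathcal G=s(\mathcal G')$.

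The key step is then a local model computation relating incidence numbers to those of suspensions. First I would fix an $(m,n)$-dimensional $\tau$-map $g\co P\to Q$ with $\mathcal G'(g)\ne\emptyset$, pick a point $q_0\in\mathcal G'(g)$, and pass to the suspension $g\times\id_\R\co P\times\R\to Q\times\R$, which by conditions $(d)$ and $(e)$ is a $\tau$-map and has $(\mathcal G)(g\times\id_\R)=\mathcal G'(g)\times\R$ near $(q_0,0)$. Since $S(g\times\id_\R)=S(g)\times\R$ and fibers of $g\times\id_\R$ over $(q,t)$ are the suspensions of fibers of $g$ over $q$, each $\rho_{m+1,n+1}$-class $\mathcal H$ that meets a neighborhood of $(q_0,0)$ in $Q\times\R$ is the suspension of a unique $\rho_{m,n}$-class, and $\mathcal H(g\times\id_\R)=s^{-1}(\mathcal H)(g)\times\R$ locally — i.e.\ the stratification of a neighborhood of $(q_0,0)$ is the product of the stratification of a neighborhood of $q_0$ with $\R$. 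Consequently the local components of $\mathcal F(g\times\id_\R)$ adjacent to a component of $\mathcal G(g\times\id_\R)$ near $(q_0,0)$ are exactly the products with $\R$ of the local components of $s^{-1}(\mathcal F)(g)=\bigsqcup_{s(\mathcal F')=\mathcal F}\mathcal F'(g)$ adjacent to the corresponding component of $\mathcal G'(g)$ near $q_0$; taking the count modulo two gives precisely the displayed identity. This is the standard argument behind \cite[Chaps.~7, 8]{Saeki04}, and I would cite that for the details of why the product structure of the strata persists (it uses that suspension commutes with the relative Ehresmann/Thom isotopy data).

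The main obstacle is making the "local stratified structure of $g\times\id_\R$ near $(q_0,0)$ is a product" assertion rigorous: one must know that the local components of $\mathcal F(f)$ near a codimension-$(\kappa+1)$ point — which are what the coefficient $n_{\mathcal F}(\mathcal G)$ counts — behave well under taking products with $\R$, and that no new $\rho$-classes appear or disappear under suspension within the relevant neighborhood. This follows from conditions $(a)$–$(e)$ together with the observation that $f\times\id_\R$ over $(q,t)$ has the same fiber (up to the chosen genericity in the Definition of disjoint union and suspension) as $f$ over $q$, so the adjacency poset of $\rho$-classes is unchanged; but it does require invoking the Thom–Mather theory that the stratification by $\rho$-classes is locally trivial, which is exactly what conditions $(b)$ and $(c)$ are set up to provide. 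Once that local product description is in hand, the cochain-map identity is immediate, and the proof concludes by noting the computation is independent of the choice of $g$ and $q_0$ by condition $(c)$.
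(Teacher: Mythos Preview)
Your proposal is correct and is precisely the standard argument the paper has in mind: the paper omits the proof entirely and simply refers the reader to \cite{Saeki04}, whose Chapters~7--8 carry out exactly the local product computation you sketch (suspend a $\tau$-map realizing $\mathcal{G}'$, observe that near $(q_0,0)$ the stratification by $\rho$-classes is the product with $\R$ of the lower-dimensional stratification, and read off the equality of incidence numbers). Your identification of the one genuine technical point---that the local stratified structure of $g\times\id_\R$ is a product, which rests on conditions $(c)$--$(e)$---is accurate, and beyond that the cochain-map identity is immediate.
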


We omit the proof of the above lemma.
For details, see \cite{Saeki04}.
 
Let us now introduce a geometric equivalence relation for $\tau$-maps. 

\begin{dfn}\label{d_cob}
Two $\tau$-maps $f_i\co M_i \to N$, $i= 0, 1$, 
of compact manifolds with boundary into a manifold without boundary 
are \emph{$\tau$-cobordant} 
if there exist a compact manifold $X$ 
with corners and 
a smooth map $F\co X \to N \times [0,1]$ that 
satisfy the following conditions:
\begin{enumerate}
\item
$\partial X =M_0 \cup Q \cup M_1$, where $M_0$, $M_1$
and $Q$ are codimension $0$ smooth submanifolds
of $\partial X$, $M_0 \cap M_1 =\emptyset$, and 
$\partial Q =(M_0 \cap Q) \cup (M_1 \cap Q)$, 
\item
$X$ has corners along $\partial Q$, 
\item
$F|_{M_0 \times [0, \varepsilon)} =f_0 \times \id_{[0, \varepsilon)}$ 
and 
$F|_{M_1 \times (1-\varepsilon, 1]} 
=f_1 \times \id_{(1-\varepsilon, 1]}$, 
where $M_0 \times [0, \varepsilon)$ and 
$M_1 \times (1-\varepsilon, 1]$ denote the collar neighborhoods 
(with corners) of 
$M_0$ and $M_1$ in $X$, respectively, 
\item  $F^{-1}(N \times \{i\}) =N_i$, $i =0, 1$, and, 
the restriction
$F|_{X \setminus (M_0 \cup M_1)} \co
X \setminus (M_0 \cup M_1) \to N \times (0, 1)$ 
is a $\tau$-map.
\end{enumerate}
In this case, we call the map $F$ a \emph{$\tau$-cobordism} between 
$f_0$ and $f_1$. 
\end{dfn}

Note that the $\tau$-cobordism relation is an 
equivalence relation among the $\tau$-maps into a fixed manifold $N$. 
For a manifold $N$, we denote
by $Cob_{\tau}(N)$
the set of all equivalence classes
of $\tau$-maps of compact manifolds into $N$ 
with respect to the $\tau$-cobordism.

\begin{prop}\label{key-lem}
If two $\tau$-maps $f_i\co M_i \to N$ 
of compact manifolds into $N$, $i=0,1$, are $\tau$-cobordant, 
then for each $\kappa$ we have 
$$
\varphi_{f_0}|_{\Image s_{\kappa \ast}} = 
\varphi_{f_1}|_{\Image s_{\kappa \ast}}\co 
\Image s_{\kappa \ast} \to H^{\kappa}(N;\Z_2), 
$$
where $s_{\kappa \ast}\co H^{\kappa}(\tau(m+1, n+1), \rho_{m+1, n+1}) 
\to H^{\kappa}(\tau(m, n), \rho_{m, n})$ denotes 
the homomorphism 
induced by the suspension. 
\end{prop}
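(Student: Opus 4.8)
The plan is to follow the argument used by the first author in \cite{Saeki04} for the boundaryless case, adapted to the present cobordism-with-corners setting. Given a $\tau$-cobordism $F\co X \to N \times [0,1]$ between $f_0$ and $f_1$, the idea is to produce, for each cocycle $c$ representing a class in $H^{\kappa}(\tau(m, n), \rho_{m, n})$ that lies in $\Image s_{\kappa}$, a $\Z_2$-chain in $N$ whose boundary is $c(f_0) - c(f_1)$. Write $c = s_{\kappa}(c')$ for some cocycle $c'$ of $\mathcal{C}(\tau(m+1, n+1), \rho_{m+1, n+1})$; since $F$ restricted to $X \setminus (M_0 \cup M_1)$ is a $\tau$-map of codimension $\ell + 1 = (n+1) - (m+1)$ into $N \times (0,1)$, the set $c'(F)$ is a $\Z_2$-cycle of closed support of codimension $\kappa$ in $N \times (0,1)$ by the general theory recalled just above. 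The key point is that the projection $\pi\co N \times [0,1] \to N$, restricted to a suitable compactification of $c'(F)$, realizes a homology between the two slices.

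The main steps, in order, are as follows. \textbf{Step 1.} Arrange $F$ so that it is a product $f_i \times \id$ near the ends, as guaranteed by condition (3) of Definition~\ref{d_cob}, and so that $\pi \circ F$ has no exceptional behavior over $0$ and $1$; then identify the fibers of $F$ over points $(q, t)$ with $t$ near $0$ (resp. near $1$) with the suspensions of the corresponding fibers of $f_0$ (resp. $f_1$). By conditions $(d)$ and $(e)$, and by the definition of $s_\kappa$, this gives $c'(F) \cap (N \times \{t\}) = c(f_0) \times \{t\}$ for small $t$ and $= c(f_1) \times \{t\}$ for $t$ near $1$, as $\Z_2$-cycles. \textbf{Step 2.} Observe that $c'(F)$, being the closure of the locus of prescribed fibers of the codimension-$(\ell+1)$ $\tau$-map $F|_{X \setminus (M_0 \cup M_1)}$, is a $\Z_2$-cycle of closed support of codimension $\kappa$ in $N \times (0,1)$; its closure in $N \times [0,1]$ is a $\Z_2$-chain $\Gamma$ whose boundary (in the Borel–Moore sense, accounting for the two ends) is exactly $c(f_0) \times \{0\} - c(f_1) \times \{1\}$. \textbf{Step 3.} Push $\Gamma$ forward by the proper map $\pi\co N \times [0,1] \to N$ to obtain a $\Z_2$-chain $\pi_*\Gamma$ in $N$ with $\partial(\pi_*\Gamma) = c(f_0) - c(f_1)$, so $c(f_0)$ and $c(f_1)$ are $\Z_2$-homologous in $N$. \textbf{Step 4.} Pass to Poincaré duals: this shows $[c(f_0)]^* = [c(f_1)]^*$ in $H^\kappa(N; \Z_2)$, i.e. $\varphi_{f_0}([c]) = \varphi_{f_1}([c])$ for every $[c] \in \Image s_{\kappa\ast}$. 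Finally, note the construction respects the cochain map $\{s_\kappa\}$, so it descends to cohomology and yields the stated equality of restrictions.

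The hard part will be Step 2 — verifying that the closure $c'(F)$ really is a well-behaved $\Z_2$-cycle of closed support with the expected boundary, given that $X$ is a manifold with corners and $F$ is only a $\tau$-map away from the ends. One must check that no unwanted boundary strata appear along $\partial Q$ or over $N \times \{0, 1\}$: this is where conditions (1)–(4) of Definition~\ref{d_cob} (in particular the corner structure along $\partial Q$ and the product form near the ends) are used to guarantee that the only boundary contributions of $\Gamma$ are the two slices. One also needs the fact, from the general theory, that $c'(F)$ has closed support in $N \times (0,1)$ and that its closure in $N \times [0,1]$ adds nothing over interior points — this follows because $F$ is proper and $\pi \circ F$ is proper, so the relevant loci are closed. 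Once this is in place, Steps 1, 3 and 4 are formal, and the argument is essentially identical to \cite[proof of the corresponding statement]{Saeki04}, to which we refer for the remaining details.
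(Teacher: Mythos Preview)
Your proposal is correct and follows essentially the same route as the paper: take a cocycle in the $(m+1,n+1)$ complex, form the associated locus $c'(F)$ for the cobordism $F$, and observe that its boundary in $N\times[0,1]$ is exactly $\overline{c}(f_1)\times\{1\}-\overline{c}(f_0)\times\{0\}$, whence the two slices are $\Z_2$-homologous. Two minor remarks: the codimension of $F|_{X\setminus(M_0\cup M_1)}$ is $(n+1)-(m+1)=\ell$, not $\ell+1$; and your Step~3 (push-forward by $\pi$) is unnecessary, since the chain $\Gamma\subset N\times[0,1]$ already gives the homology and the inclusions $N\times\{i\}\hookrightarrow N\times[0,1]$ are homotopy equivalences.
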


\begin{proof}
Let $F\co X \to N \times [0,1]$ be a 
$\tau$-cobordism between $f_0$ 
and $f_1$. 
For each $\kappa$-dimensional cocycle 
$c =
\sum_{\kappa(\mathcal{F}) =\kappa} n_\mathcal{F}\mathcal{F}$ of 
$\mathcal{C}(\tau(m+1, n+1), \rho_{m+1, n+1})$, 
put $\overline{c} = s_{\kappa}(c) \in 
C^{\kappa}(\tau(m, n), \rho_{m, n})$. 
Then, we have
$$
\partial c(F) = 
\overline{c}(f_1) \times \{1\} 
- \overline{c}(f_0) \times \{0\},$$ 
since $c$ is a cocycle, 
where $c(F)$ denotes the closure in $N \times [0, 1]$
of the set of points 
$(q, t) \in N \times (0, 1)$ such that the fiber over 
$(q, t)$ is in $\mathcal{F}$ with $n_\mathcal{F} \ne 0$. 
Then the result follows immediately. 
\end{proof}

Thus, for each cohomology class
$[c] \in H^{\kappa}(\tau(m+1, n+1), \rho_{m+1, n+1})$
and an $n$-dimensional manifold $N$ without boundary,
we obtain the map
\[
I_{[c]} \co Cob_{\tau}(N) \to H^{\kappa}(N;\Z_2)
\] 
defined by $I_{[c]}(f) = \varphi_{f}({s_{\kappa}}_*[c])$,
which does not depend on the choice of a representative
$f$ of a given $\tau$-cobordism class. 
Namely, each element in 
$$H^{\kappa}(\tau(m+1, n+1), \rho_{m+1, n+1})$$
induces a $\tau$-cobordism invariant for
$\tau$-maps into an $n$-dimensional manifold $N$.

\subsection{Universal complex for stable maps of $n$-dimensional
manifolds with boundary into $(n-1)$-dimensional manifolds}

For a positive integer $n$, let
$b\mathcal{S}_{\mathrm{pr}}(n, n-1)$
be the set of fibers for proper $C^0$ stable Thom maps
of $n$-dimensional manifolds with boundary into $(n-1)$-dimensional
manifolds without boundary. We put
$$b\mathcal{S}_{\mathrm{pr}} = 
\bigcup_{n=1}^\infty b\mathcal{S}_{\mathrm{pr}}(n, n-1).$$

\begin{rem}
If the dimension pair $(n, n-1)$ is in the nice range,
then $C^0$ stable maps are $C^\infty$ stable (for
example, see \cite{duPlessisWall95}), and consequently
they are Thom maps. For example,
this is the case if $n \leq 8$.
\end{rem}

Furthermore,
let $\rho_{n, n-1}(2)$ be the \emph{$C^0$ equivalence relation modulo two 
regular fibers}
for fibers in $b\mathcal{S}_{\mathrm{pr}}(n, n-1)$: i.e., two fibers
in $b\mathcal{S}_{\mathrm{pr}}(n, n-1)$
are $\rho_{n, n-1}(2)$-equivalent if they
become $C^0$-equivalent after we add some regular
fibers to each of them with
the numbers of added components having the same parity.
Note that, under this equivalence, for $n = 2, 3$,
we do not distinguish the fibers of types 
$\widetilde{\rm b0}^0$ with
$\widetilde{\rm b0}^1$. Therefore, in the following,
we denote both of them by $\widetilde{\rm b0}$.

We denote by $\rho(2)$ the equivalence relation
on $b\mathcal{S}_{\mathrm{pr}}$ induced by $\rho_{n, n-1}(2)$,
$n \geq 1$. Note that the set $b\mathcal{S}_{\mathrm{pr}}$ 
and the equivalence relation $\rho(2)$ 
satisfy conditions (a)--(e) described above.

For a $C^0$ equivalence class $\widetilde{\mathcal{F}}$ of 
singular fibers, 
denote by $\widetilde{\mathcal{F}}_o$ (or $\widetilde{\mathcal{F}}_e$) 
the equivalence class with respect to $\rho_{n, n-1}(2)$ 
which consists of singular fibers of type $\widetilde{\mathcal{F}}$ 
with an odd number (resp.\ even number) of regular fiber components. 
For $n = 2, 3$, we denote by $\widetilde{\rm b0}_o$ and $\widetilde{\rm b0}_e$
the equivalence class with respect to $\rho_{n, n-1}(2)$ 
which consist exclusively of an odd (resp.\ even)
number of regular fiber components.

Then, for the universal complex 
$\mathcal{C}(b\mathcal{S}_{\mathrm{pr}}(3, 2), \rho_{3, 2}(2))$, 
the coboundary homomorphism is given by the 
following formulae, which are obtained
with the help of Lemma~\ref{lemma2.3}: 
\begin{eqnarray*}
\delta_0(\widetilde{\rm b0}_o) 
& = & \delta_0(\widetilde{\rm b0}_e) 
=\widetilde{\rm bI}^2 +\widetilde{\rm bI}^3 +\widetilde{\rm bI}^4 
+\widetilde{\rm bI}^6 +\widetilde{\rm bI}^8, 
\\ 
\delta_1(\widetilde{\rm bI}^2_o) 
& = & \widetilde{\rm bII}^{2,3} +\widetilde{\rm bII}^{2,4} 
+\widetilde{\rm bII}^{2,6} +\widetilde{\rm bII}^{2,8} 
+\widetilde{\rm bII}^a_e +\widetilde{\rm bII}^b_e 
+\widetilde{\rm bII}^d_o, 
\\ 
\delta_1(\widetilde{\rm bI}^2_e) 
& = & \widetilde{\rm bII}^{2,3} +\widetilde{\rm bII}^{2,4} 
+\widetilde{\rm bII}^{2,6} +\widetilde{\rm bII}^{2,8} 
+\widetilde{\rm bII}^a_o +\widetilde{\rm bII}^b_o 
+\widetilde{\rm bII}^d_e,
\\ 
\delta_1(\widetilde{\rm bI}^3_o) 
& = & \widetilde{\rm bII}^{2,3} +\widetilde{\rm bII}^{3,4} 
+\widetilde{\rm bII}^{3,6} +\widetilde{\rm bII}^{3,8} 
+\widetilde{\rm bII}^{13}_e +\widetilde{\rm bII}^{22}_o 
+\widetilde{\rm bII}^a_o, 
\\
\delta_1(\widetilde{\rm bI}^3_e) 
& = & \widetilde{\rm bII}^{2,3} +\widetilde{\rm bII}^{3,4} 
+\widetilde{\rm bII}^{3,6} +\widetilde{\rm bII}^{3,8} 
+\widetilde{\rm bII}^{13}_o +\widetilde{\rm bII}^{22}_e 
+\widetilde{\rm bII}^a_e, 
\\
\delta_1(\widetilde{\rm bI}^4_o) 
& = & \widetilde{\rm bII}^{2,4} +\widetilde{\rm bII}^{3,4} 
+\widetilde{\rm bII}^{4,6} +\widetilde{\rm bII}^{4,8} 
+\widetilde{\rm bII}^{13}_e +\widetilde{\rm bII}^{22}_o 
+\widetilde{\rm bII}^{23}_o +\widetilde{\rm bII}^{24}\\
& & \quad 
+\widetilde{\rm bII}^b_o +\widetilde{\rm bII}^f_o, 
\\
\delta_1(\widetilde{\rm bI}^4_e) 
& = & \widetilde{\rm bII}^{2,4} +\widetilde{\rm bII}^{3,4} 
+\widetilde{\rm bII}^{4,6} +\widetilde{\rm bII}^{4,8} 
+\widetilde{\rm bII}^{13}_o +\widetilde{\rm bII}^{22}_e 
+\widetilde{\rm bII}^{23}_e +\widetilde{\rm bII}^{24}\\
& & \quad 
+\widetilde{\rm bII}^b_e +\widetilde{\rm bII}^f_e, 
\\
\delta_1(\widetilde{\rm bI}^5_o) 
& = & \widetilde{\rm bII}^{2,5} +\widetilde{\rm bII}^{3,5} 
+\widetilde{\rm bII}^{4,5} +\widetilde{\rm bII}^{5,6} 
+\widetilde{\rm bII}^{5,8} +\widetilde{\rm bII}^{15} 
+\widetilde{\rm bII}^{23}_o +\widetilde{\rm bII}^{25}\\
& & \quad 
+\widetilde{\rm bII}^{30}_o 
+\widetilde{\rm bII}^{38}_o +\widetilde{\rm bII}^{e}_o, 
\\
\delta_1(\widetilde{\rm bI}^5_e) 
& = & \widetilde{\rm bII}^{2,5} +\widetilde{\rm bII}^{3,5} 
+\widetilde{\rm bII}^{4,5} +\widetilde{\rm bII}^{5,6} 
+\widetilde{\rm bII}^{5,8} +\widetilde{\rm bII}^{15} 
+\widetilde{\rm bII}^{23}_e +\widetilde{\rm bII}^{25}\\
& & \quad 
+\widetilde{\rm bII}^{30}_e 
+\widetilde{\rm bII}^{38}_e +\widetilde{\rm bII}^{e}_e, 
\\
\delta_1(\widetilde{\rm bI}^6_o) 
& = & \widetilde{\rm bII}^{2,6} +\widetilde{\rm bII}^{3,6} 
+\widetilde{\rm bII}^{4,6} +\widetilde{\rm bII}^{6,8} 
+\widetilde{\rm bII}^{c}_e +\widetilde{\rm bII}^{d}_o 
+\widetilde{\rm bII}^{e}_e +\widetilde{\rm bII}^{f}_e, 
\\
\delta_1(\widetilde{\rm bI}^6_e) 
& = & \widetilde{\rm bII}^{2,6} +\widetilde{\rm bII}^{3,6} 
+\widetilde{\rm bII}^{4,6} +\widetilde{\rm bII}^{6,8} 
+\widetilde{\rm bII}^{c}_o +\widetilde{\rm bII}^{d}_e 
+\widetilde{\rm bII}^{e}_o +\widetilde{\rm bII}^{f}_o, 
\\
\delta_1(\widetilde{\rm bI}^7_o) 
& = & \widetilde{\rm bII}^{2,7} +\widetilde{\rm bII}^{3,7} 
+\widetilde{\rm bII}^{4,7} +\widetilde{\rm bII}^{6,7} 
+\widetilde{\rm bII}^{7,8} +\widetilde{\rm bII}^{22} 
+\widetilde{\rm bII}^{23}_e +\widetilde{\rm bII}^{d}_o\\
& & \quad  
+\widetilde{\rm bII}^{f}_o, 
\\
\delta_1(\widetilde{\rm bI}^7_e) 
& = & \widetilde{\rm bII}^{2,7} +\widetilde{\rm bII}^{3,7} 
+\widetilde{\rm bII}^{4,7} +\widetilde{\rm bII}^{6,7} 
+\widetilde{\rm bII}^{7,8} +\widetilde{\rm bII}^{22} 
+\widetilde{\rm bII}^{23}_o +\widetilde{\rm bII}^{d}_e\\
& & \quad  
+\widetilde{\rm bII}^{f}_e, 
\\
\delta_1(\widetilde{\rm bI}^8_o) 
& = & \widetilde{\rm bII}^{2,8} +\widetilde{\rm bII}^{3,8} 
+\widetilde{\rm bII}^{4,8} +\widetilde{\rm bII}^{6,8} 
+\widetilde{\rm bII}^{23}_o +\widetilde{\rm bII}^{24} 
+\widetilde{\rm bII}^{c}_o +\widetilde{\rm bII}^{e}_o, 
\\
\delta_1(\widetilde{\rm bI}^8_e) 
& = & \widetilde{\rm bII}^{2,8} +\widetilde{\rm bII}^{3,8} 
+\widetilde{\rm bII}^{4,8} +\widetilde{\rm bII}^{6,8} 
+\widetilde{\rm bII}^{23}_e +\widetilde{\rm bII}^{24} 
+\widetilde{\rm bII}^{c}_e +\widetilde{\rm bII}^{e}_e, 
\\
\delta_1(\widetilde{\rm bI}^9_o) 
& = & \widetilde{\rm bII}^{2,9} +\widetilde{\rm bII}^{3,9} 
+\widetilde{\rm bII}^{4,9} +\widetilde{\rm bII}^{6,9} 
+\widetilde{\rm bII}^{8,9} +\widetilde{\rm bII}^{27} 
+\widetilde{\rm bII}^{35}_e +\widetilde{\rm bII}^{37}_o, 
\\
\delta_1(\widetilde{\rm bI}^9_e) 
& = & \widetilde{\rm bII}^{2,9} +\widetilde{\rm bII}^{3,9} 
+\widetilde{\rm bII}^{4,9} +\widetilde{\rm bII}^{6,9} 
+\widetilde{\rm bII}^{8,9} +\widetilde{\rm bII}^{27} 
+\widetilde{\rm bII}^{35}_o +\widetilde{\rm bII}^{37}_e, 
\\
\delta_1(\widetilde{\rm bI}^{10}_o) 
& = & \widetilde{\rm bII}^{2,10} +\widetilde{\rm bII}^{3,10} 
+\widetilde{\rm bII}^{4,10} +\widetilde{\rm bII}^{6,10} 
+\widetilde{\rm bII}^{8,10} +\widetilde{\rm bII}^{30}_e +\widetilde{\rm bII}^{32}\\
& & \quad 
+\widetilde{\rm bII}^{33} +\widetilde{\rm bII}^{35}_o 
+\widetilde{\rm bII}^{37}_o +\widetilde{\rm bII}^{38}_o 
+\widetilde{\rm bII}^{39}, 
\\
\delta_1(\widetilde{\rm bI}^{10}_e) 
& = & \widetilde{\rm bII}^{2,10} +\widetilde{\rm bII}^{3,10} 
+\widetilde{\rm bII}^{4,10} +\widetilde{\rm bII}^{6,10} 
+\widetilde{\rm bII}^{8,10} +\widetilde{\rm bII}^{30}_o +\widetilde{\rm bII}^{32}\\
& & \quad 
+\widetilde{\rm bII}^{33} +\widetilde{\rm bII}^{35}_e 
+\widetilde{\rm bII}^{37}_e +\widetilde{\rm bII}^{38}_e 
+\widetilde{\rm bII}^{39}, 
\end{eqnarray*}
where 
$\widetilde{\mathcal{F}}$ denotes
$\widetilde{\mathcal{F}}_o +\widetilde{\mathcal{F}}_e$. 

Then, by a straightforward calculation, we obtain the following. 

\begin{prop}
The cohomology groups of 
$
\mathcal{C}(b\mathcal{S}_{\mathrm{pr}}(3, 2), \rho_{3, 2}(2))
$
are described as follows:
\begin{itemize}
\item[$(1)$]
$H^0(b\mathcal{S}_{\mathrm{pr}}(3, 2), \rho_{3, 2}(2)) \cong \Z_2$, 
generated by $[\widetilde{\rm b0}_o +\widetilde{\rm b0}_e]$, 
\item[$(2)$]
$H^1(b\mathcal{S}_{\mathrm{pr}}(3, 2), \rho_{3, 2}(2)) \cong \Z_2 \oplus \Z_2$, 
generated by 
\begin{eqnarray*}
\beta & = & [\widetilde{\rm bI}^6 +\widetilde{\rm bI}^7 +\widetilde{\rm bI}^8] 
=
[\widetilde{\rm bI}^2 +\widetilde{\rm bI}^3 
+\widetilde{\rm bI}^4 +\widetilde{\rm bI}^7], 
\\
\gamma 
& = & [\widetilde{\rm bI}^2_o +\widetilde{\rm bI}^3_e +\widetilde{\rm bI}^4_e 
+\widetilde{\rm bI}^6_o +\widetilde{\rm bI}^8_e] 
= 
[\widetilde{\rm bI}^2_e +\widetilde{\rm bI}^3_o +\widetilde{\rm bI}^4_o 
+\widetilde{\rm bI}^6_e +\widetilde{\rm bI}^8_o]. 
\end{eqnarray*}
\end{itemize}
\end{prop}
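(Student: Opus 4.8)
The plan is to compute directly the cohomology of the cochain complex $\mathcal{C}(b\mathcal{S}_{\mathrm{pr}}(3,2),\rho_{3,2}(2))$ in degrees $0$ and $1$, using the coboundary formulae displayed just above. After the identification $\widetilde{\rm b0}^0=\widetilde{\rm b0}^1=:\widetilde{\rm b0}$, the group $C^{0}$ is the $2$-dimensional $\Z_2$-space on $\widetilde{\rm b0}_o,\widetilde{\rm b0}_e$, while $C^{1}$ is the $18$-dimensional $\Z_2$-space on $\widetilde{\rm bI}^\mu_o,\widetilde{\rm bI}^\mu_e$ with $2\le\mu\le10$ (by Proposition~\ref{ClassifyFibers}), and $C^{2}$ is spanned by the parity refinements of the codimension-$2$ classes. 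For $(1)$ there is essentially nothing to do: since $\delta_0(\widetilde{\rm b0}_o)=\delta_0(\widetilde{\rm b0}_e)=\widetilde{\rm bI}^2+\widetilde{\rm bI}^3+\widetilde{\rm bI}^4+\widetilde{\rm bI}^6+\widetilde{\rm bI}^8\neq0$, the map $\delta_0$ has rank $1$, so $H^{0}=\ker\delta_0=\Z_2\langle\widetilde{\rm b0}_o+\widetilde{\rm b0}_e\rangle$.

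For $H^{1}=\ker\delta_1/\Image\delta_0$ I would first cut $\ker\delta_1$ in half. Writing a $1$-cochain as $c=\sum_{\mu=2}^{10}(a_\mu\widetilde{\rm bI}^\mu_o+b_\mu\widetilde{\rm bI}^\mu_e)$ and setting $u_\mu=a_\mu+b_\mu$, one inspects for each disjoint-union codimension-$2$ class $\widetilde{\rm bII}^{\mu,\nu}$ which coboundary formulae it occurs in. The outcome is that $\delta_1 c=0$ forces $u_5=u_7=u_9=u_{10}=0$ and $u_2=u_3=u_4=u_6=u_8$; call this common value $s$. Thus $c\mapsto u_2$ restricts to a well-defined linear functional $s$ on $\ker\delta_1$, and it suffices to analyze $\ker\delta_1\cap\{s=0\}$ and to check whether $\{s=1\}$ is met.

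On $\{s=0\}$ one has $a_\mu=b_\mu$, so $c=\sum a_\mu\widetilde{\rm bI}^\mu$ with $\widetilde{\rm bI}^\mu:=\widetilde{\rm bI}^\mu_o+\widetilde{\rm bI}^\mu_e$; adding the two displayed formulae for each $\mu$, all doubly occurring codimension-$2$ terms cancel modulo $2$ and one is left with short expressions such as $\delta_1(\widetilde{\rm bI}^2)=\widetilde{\rm bII}^a+\widetilde{\rm bII}^b+\widetilde{\rm bII}^d$ and $\delta_1(\widetilde{\rm bI}^5)=\widetilde{\rm bII}^{23}+\widetilde{\rm bII}^{30}+\widetilde{\rm bII}^{38}+\widetilde{\rm bII}^e$. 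Collecting coefficients of the codimension-$2$ basis elements gives a small $\Z_2$-linear system whose solution space is $2$-dimensional, spanned by $\widetilde{\rm bI}^6+\widetilde{\rm bI}^7+\widetilde{\rm bI}^8$ and by $\Image\delta_0=\Z_2\langle\widetilde{\rm bI}^2+\widetilde{\rm bI}^3+\widetilde{\rm bI}^4+\widetilde{\rm bI}^6+\widetilde{\rm bI}^8\rangle$; comparing them shows $\widetilde{\rm bI}^6+\widetilde{\rm bI}^7+\widetilde{\rm bI}^8$ and $\widetilde{\rm bI}^2+\widetilde{\rm bI}^3+\widetilde{\rm bI}^4+\widetilde{\rm bI}^7$ are cohomologous, their class being $\beta$. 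On $\{s=1\}$ one takes the particular cochain $c'_0=\sum_{\mu\in\{2,3,4,6,8\}}\widetilde{\rm bI}^\mu_e$, computes $\delta_1 c'_0=\widetilde{\rm bII}^a+\widetilde{\rm bII}^b+\widetilde{\rm bII}^c+\widetilde{\rm bII}^e+\widetilde{\rm bII}^f$, and must solve $\sum a_\mu\delta_1(\widetilde{\rm bI}^\mu)=\delta_1 c'_0$ with the reduced $\delta_1$ from the $s=0$ step; this is consistent and has a $2$-dimensional affine solution set, one representative being $\widetilde{\rm bI}^2_o+\widetilde{\rm bI}^3_e+\widetilde{\rm bI}^4_e+\widetilde{\rm bI}^6_o+\widetilde{\rm bI}^8_e$, whose class is $\gamma$. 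Hence $s\co\ker\delta_1\to\Z_2$ is onto, so $\dim_{\Z_2}\ker\delta_1=3$, and $H^{1}\cong\Z_2\oplus\Z_2$ with generators $\beta$ and $\gamma$, which are independent since they take different values of $s$. The coincidence of the two displayed representatives of each of $\beta$ and $\gamma$ is then immediate, since in both cases the difference is exactly $\widetilde{\rm bI}^2+\widetilde{\rm bI}^3+\widetilde{\rm bI}^4+\widetilde{\rm bI}^6+\widetilde{\rm bI}^8=\delta_0(\widetilde{\rm b0})$.

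The only laborious part, and the likely source of slips, is the bookkeeping in these two linear systems: for every codimension-$2$ class $\mathcal{G}$ and each of its refinements $\mathcal{G}_o,\mathcal{G}_e$ one must read off exactly which of the eighteen generators $\widetilde{\rm bI}^\mu_\epsilon$ carry $\mathcal{G}_o$ (resp.\ $\mathcal{G}_e$) in their coboundary, with the correct $\Z_2$-coefficient, and verify that no codimension-$2$ class has been overlooked. What makes this tractable is the structural observation above — that on $\ker\delta_1$ all of $u_2,u_3,u_4,u_6,u_8$ agree and the remaining $u_\mu$ vanish — together with the point, visible in the formulae, that a disjoint-union class $\widetilde{\rm bII}^{\mu,\nu}$ with $\nu\in\{5,7,9,10\}$ occurs only in $\delta_1(\widetilde{\rm bI}^\nu_o)$ and $\delta_1(\widetilde{\rm bI}^\nu_e)$ (and not in those of $\widetilde{\rm bI}^\mu$), so that it imposes the single relation $u_\nu=0$ rather than the relation $u_\mu+u_\nu=0$ one might naively expect.
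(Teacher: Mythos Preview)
Your proposal is correct and is precisely the ``straightforward calculation'' that the paper invokes but does not display; the paper offers no proof beyond that phrase, so there is nothing further to compare. Your organizational device of splitting $\ker\delta_1$ via the parity functional $s=u_2$ is a clean way to structure the linear algebra, and your verification that the reduced coboundaries $\delta_1(\widetilde{\rm bI}^\mu)=\delta_1(\widetilde{\rm bI}^\mu_o)+\delta_1(\widetilde{\rm bI}^\mu_e)$ capture all constraints on $\{s=0\}$ (because $\delta_1$ restricted there lands in the symmetric subspace spanned by the $\mathcal{G}_o+\mathcal{G}_e$) is the only point that deserves an explicit sentence in a write-up.

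One small imprecision in your final paragraph: the statement that $\widetilde{\rm bII}^{\mu,\nu}$ with $\nu\in\{5,7,9,10\}$ occurs only in $\delta_1(\widetilde{\rm bI}^\nu_o)$ and $\delta_1(\widetilde{\rm bI}^\nu_e)$ is literally true only when $\mu\in\{2,3,4,6,8\}$; when both indices lie in $\{5,7,9,10\}$ the class occurs in no coboundary at all. This does not affect your conclusion $u_\nu=0$, which already follows from any single $\widetilde{\rm bII}^{\mu,\nu}$ with $\mu\in\{2,3,4,6,8\}$.
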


Note that the ranks of $C^i(b\mathcal{S}_{\mathrm{pr}}(3, 2), \rho_{3, 2}(2))$, 
$i =0, 1, 2$, are equal to 
$2$, $18$ and $160$, respectively. 

Let 
\[
{s_\kappa}_\ast \co 
H^\kappa(b\mathcal{S}_{pr}(3, 2), \rho_{3, 2}(2)) \to 
H^\kappa(b\mathcal{S}_{pr}(2, 1), \rho_{2, 1}(2))
\]
be the homomorphism induced by suspension $s_{\kappa}$. 
Note that $s_\kappa(\mathcal{F}_*) =\mathcal{F}_*$ 
for $\kappa =0, 1$ and for each $\rho_{3, 2}(2)$-class $\mathcal{F}_*$
of codimension $\kappa$
(see Remark~\ref{rem:Morse}).
Note also that $s_\kappa =0$ for $\kappa = 2$. 
Then, 
Lemma~\ref{coexistence2to1} shows that $s_{1 \ast} \beta$ 
induces a trivial $b\mathcal{S}_{\mathrm{pr}}$-cobordism invariant 
for stable Morse functions 
$f\co V \to W$ on compact surfaces $V$ with boundary
into $W = \R$ or $S^1$. 
Furthermore, by using the same method as in \cite[Lemma~14.1]{Saeki04}, 
we can show that 
$s_{1 \ast} \gamma$ also induces a trivial 
$b\mathcal{S}_{\mathrm{pr}}$-cobordism invariant.
Therefore, unfortunately, the above cohomology information
does not provide us of a new cobordism invariant.

Let us now consider a certain restricted class of
stable maps. 
For a positive integer $n$, let
$\mathcal{AS}_{\mathrm{pr}}(n, n-1)$
be the set of fibers for proper admissible $C^0$ stable Thom maps
of $n$-dimensional manifolds with boundary into $(n-1)$-dimensional
manifolds without boundary, where a $C^0$ stable map $f\co M \to N$ 
of an $n$-dimensional manifold with boundary
into an $(n -1)$-dimensional manifold without boundary 
is \emph{admissible} 
if it is a submersion on a neighborhood of $\partial M$. 
In other words, for a stable map $f\co M \to N$ 
of a $3$-dimensional manifold with boundary
into a surface without boundary 
such a map can be characterized
as a stable map without
definite $\Sigma^{2, 0}_{1, 0}$ points nor 
indefinite $\Sigma^{2, 0}_{1, 0}$ points. 
Note that stable Morse
functions on compact surfaces and their
suspensions are always admissible.

Furthermore, set
$$\mathcal{AS}_{pr} = \bigcup_{n=1}^\infty
\mathcal{AS}_{\mathrm{pr}}(n, n-1).$$
Note that the above set together with the equivalence
relation induced by $\rho(2)$, which we still denote
by $\rho(2)$ by abuse of notation, satisfy conditions
(a)--(e) mentioned before.

Then, a straightforward calculation shows
the following.

\begin{prop}
The cohomology groups of the universal complex
$$
\mathcal{C}(\mathcal{AS}_{pr}(3, 2), \rho_{3, 2}(2))
$$
for admissible stable maps of $3$-manifolds with
boundary to surfaces without boundary with
respect to the $C^0$ equivalence modulo two
regular fiber components
are described as follows:
\begin{itemize}
\item[$(1)$]
$H^0(\mathcal{AS}_{\mathrm{pr}}(3, 2), \rho_{3, 2}(2)) \cong \Z_2$,
generated by $[\widetilde{\rm b0}_o +\widetilde{\rm b0}_e]$, 
\item[$(2)$]
$H^1(\mathcal{AS}_{\mathrm{pr}}(3, 2), \rho_{3, 2}(2)) 
\cong \Z_2 \oplus \Z_2 \oplus \Z_2$, 
generated by
\begin{eqnarray*}
\alpha & = & [\widetilde{\rm bI}^2 +\widetilde{\rm bI}^3 +\widetilde{\rm bI}^4 
+\widetilde{\rm bI}^5 +\widetilde{\rm bI}^9 +\widetilde{\rm bI}^{10}], 
\\
\beta & = & [\widetilde{\rm bI}^2 +\widetilde{\rm bI}^3 +\widetilde{\rm bI}^4 
+\widetilde{\rm bI}^7] 
= [\widetilde{\rm bI}^6 +\widetilde{\rm bI}^7 +\widetilde{\rm bI}^8],
\\ 
\gamma 
& = & [\widetilde{\rm bI}^2_o +\widetilde{\rm bI}^3_e +\widetilde{\rm bI}^4_e 
+\widetilde{\rm bI}^6_o +\widetilde{\rm bI}^8_e] 
= 
[\widetilde{\rm bI}^2_e +\widetilde{\rm bI}^3_o +\widetilde{\rm bI}^4_o 
+\widetilde{\rm bI}^6_e +\widetilde{\rm bI}^8_o]. 
\end{eqnarray*}
\end{itemize}
\end{prop}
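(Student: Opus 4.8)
The plan is to reduce the computation to that of the complex $\mathcal{C}(b\mathcal{S}_{\mathrm{pr}}(3, 2), \rho_{3, 2}(2))$ treated just above. The first step is to identify exactly which fibers disappear when we pass to admissible maps. A stable map of a $3$-manifold with boundary into a surface is admissible precisely when it has no definite and no indefinite $\Sigma^{2, 0}_{1, 0}$ point, so $\mathcal{AS}_{\mathrm{pr}}(3, 2)$ is obtained from $b\mathcal{S}_{\mathrm{pr}}(3, 2)$ by deleting the fibers whose central level set contains such a point. At a $\Sigma^{2, 0}_{1, 0}$ point the images $f(S(f))$ and $f(S(f|_{\partial M}))$ are mutually tangent, which is a codimension-two condition in the target, so no fiber of codimension $\le 1$ is of this type; and by the classification (Proposition~\ref{ClassifyFibers}; compare the local models of Lemma~\ref{lemma2.3}(7) and (8)) the codimension-two ones are precisely $\widetilde{\rm bII}^{d}$, $\widetilde{\rm bII}^{e}$ and $\widetilde{\rm bII}^{f}$. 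Hence $C^{0}$, $C^{1}$ and $\delta_{0}$ of $\mathcal{C}(\mathcal{AS}_{\mathrm{pr}}(3, 2), \rho_{3, 2}(2))$ coincide with those of $\mathcal{C}(b\mathcal{S}_{\mathrm{pr}}(3, 2), \rho_{3, 2}(2))$ (of ranks $2$ and $18$), while $\delta_{1}$ is that of the full complex with all terms involving $\widetilde{\rm bII}^{d}$, $\widetilde{\rm bII}^{e}$, $\widetilde{\rm bII}^{f}$ (in every $o/e$-refinement) suppressed; the deletions in codimension $\ge 3$ do not affect $H^{0}$ or $H^{1}$. In particular $\delta_{0}$ is unchanged and $\delta_{0}(\widetilde{\rm b0}_{o}) = \delta_{0}(\widetilde{\rm b0}_{e}) = \widetilde{\rm bI}^{2} + \widetilde{\rm bI}^{3} + \widetilde{\rm bI}^{4} + \widetilde{\rm bI}^{6} + \widetilde{\rm bI}^{8} \ne 0$, which gives $(1)$ at once.

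For $(2)$ I would write $\delta_{1}^{\mathcal{AS}} = p\circ\delta_{1}$, where $\delta_{1}$ is the coboundary of the full complex and $p$ is the projection of $C^{2}(b\mathcal{S}_{\mathrm{pr}}(3, 2), \rho_{3, 2}(2))$ killing the six generators $\widetilde{\rm bII}^{d}_{o}, \widetilde{\rm bII}^{d}_{e}, \widetilde{\rm bII}^{e}_{o}, \widetilde{\rm bII}^{e}_{e}, \widetilde{\rm bII}^{f}_{o}, \widetilde{\rm bII}^{f}_{e}$; call their span $W$, so that $\ker\delta_{1}^{\mathcal{AS}} = \delta_{1}^{-1}(W)$. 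By the preceding proposition $\ker\delta_{1} = \langle\delta_{0}(\widetilde{\rm b0}_{o}), \beta, \gamma\rangle$ is three-dimensional, and $\beta$, $\gamma$, being cocycles of the full complex, are still cocycles here. A direct substitution into the displayed formulae gives
\[
\delta_{1}\bigl(\widetilde{\rm bI}^{2} + \widetilde{\rm bI}^{3} + \widetilde{\rm bI}^{4} + \widetilde{\rm bI}^{5} + \widetilde{\rm bI}^{9} + \widetilde{\rm bI}^{10}\bigr) = \widetilde{\rm bII}^{d} + \widetilde{\rm bII}^{e} + \widetilde{\rm bII}^{f} \in W,
\]
so $\alpha \in \ker\delta_{1}^{\mathcal{AS}}\setminus\ker\delta_{1}$, and $\{\delta_{0}(\widetilde{\rm b0}_{o}), \alpha, \beta, \gamma\}$ is readily checked to be linearly independent in $C^{1}$ ($\gamma$ is the only one not invariant under $o\leftrightarrow e$, $\alpha$ the only remaining one involving $\widetilde{\rm bI}^{5}$, $\widetilde{\rm bI}^{9}$, $\widetilde{\rm bI}^{10}$, and $\delta_{0}(\widetilde{\rm b0}_{o})\ne\beta$). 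Thus $\dim\ker\delta_{1}^{\mathcal{AS}}\ge 4$.

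It remains — and this is the step I expect to take most of the work — to show $\dim\ker\delta_{1}^{\mathcal{AS}} = 4$. Write $x = \sum_{\mu,\epsilon}c_{\mu,\epsilon}\widetilde{\rm bI}^{\mu}_{\epsilon}$ and put $a_{\mu} := c_{\mu,o} + c_{\mu,e}$. None of the classes $\widetilde{\rm bII}^{\mu,\nu}_{*}$ belongs to $W$; since (by inspection of the formulae) $\widetilde{\rm bII}^{\mu,\nu}$ occurs in both $\delta_{1}(\widetilde{\rm bI}^{\mu}_{*})$ and $\delta_{1}(\widetilde{\rm bI}^{\nu}_{*})$ when $\mu,\nu\in\{2,3,4,6,8\}$, but only in $\delta_{1}(\widetilde{\rm bI}^{\nu}_{*})$ when $\nu\in\{5,7,9,10\}$, the condition $\delta_{1}(x)\in W$ forces $a_{2} = a_{3} = a_{4} = a_{6} = a_{8}$ and $a_{5} = a_{7} = a_{9} = a_{10} = 0$ — exactly the pattern of the $a_{\mu}$'s of $\gamma$. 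Subtracting a multiple of $\gamma$, we may thus assume all $a_{\mu} = 0$, i.e.\ $x = \sum_{\mu}c_{\mu}\widetilde{\rm bI}^{\mu}$ is invariant under $o\leftrightarrow e$; for such $x$ the only codimension-two classes appearing in $\delta_{1}(x)$ outside $W$ are $\widetilde{\rm bII}^{13}$, $\widetilde{\rm bII}^{22}$, $\widetilde{\rm bII}^{23}$, $\widetilde{\rm bII}^{30}$, $\widetilde{\rm bII}^{35}$, $\widetilde{\rm bII}^{37}$, $\widetilde{\rm bII}^{38}$, $\widetilde{\rm bII}^{a}$, $\widetilde{\rm bII}^{b}$, $\widetilde{\rm bII}^{c}$, and setting their coefficients to zero yields six independent linear conditions on $c_{2},\dots,c_{10}$, whose solution space is the $3$-dimensional span of $\delta_{0}(\widetilde{\rm b0}_{o})$, $\alpha$, $\beta$. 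Hence $\delta_{1}^{-1}(W) = \langle\delta_{0}(\widetilde{\rm b0}_{o}), \alpha, \beta, \gamma\rangle$ is $4$-dimensional, and
\[
H^{1}(\mathcal{AS}_{\mathrm{pr}}(3, 2), \rho_{3, 2}(2)) = \ker\delta_{1}^{\mathcal{AS}}/\Image\delta_{0} \cong \Z_{2}\oplus\Z_{2}\oplus\Z_{2},
\]
generated by $[\alpha]$, $[\beta]$, $[\gamma]$, which is $(2)$.

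The principal obstacle is carrying out this last bookkeeping cleanly — and, upstream of it, verifying from Figure~\ref{fibersbM3toR2} (and the nearby-level-set pictures of Lemma~\ref{lemma2.3}) that $\widetilde{\rm bII}^{d}$, $\widetilde{\rm bII}^{e}$, $\widetilde{\rm bII}^{f}$ are exactly the codimension-two fibers through a $\Sigma^{2, 0}_{1, 0}$ point. Everything else is formal: the reduction $\delta_{1}^{\mathcal{AS}} = p\circ\delta_{1}$ isolates the new phenomenon (the kernel of $\delta_{1}$ can only grow, and it grows by exactly the one dimension detected by $\alpha$), so that, apart from re-deriving $\ker\delta_{1}$ for the full complex, one never has to redo the rank computation from scratch.
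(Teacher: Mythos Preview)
Your argument is correct and considerably more detailed than what the paper offers, which is simply the sentence ``a straightforward calculation shows the following'' with no further elaboration. In that sense there is no paper proof to compare against beyond the implicit claim that one can read off the answer from the displayed coboundary formulae.

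Your organization is genuinely cleaner than a brute-force kernel computation: by writing $\delta_{1}^{\mathcal{AS}} = p\circ\delta_{1}$ and observing that $\ker\delta_{1}^{\mathcal{AS}} = \delta_{1}^{-1}(W)$ contains $\ker\delta_{1}$, you reduce the problem to locating the \emph{new} cocycles, and the direct check $\delta_{1}(\alpha) = \widetilde{\rm bII}^{d} + \widetilde{\rm bII}^{e} + \widetilde{\rm bII}^{f}$ exhibits one. The upper bound via the $a_{\mu}$-analysis and the six linear conditions on the $o/e$-symmetric part is correct (I checked your six conditions: they are independent and leave a $3$-dimensional solution space spanned by $\delta_{0}(\widetilde{\rm b0}_{o})$, $\alpha$, $\beta$).

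The one point you flag as needing verification from Figure~\ref{fibersbM3toR2} --- that $\widetilde{\rm bII}^{d}$, $\widetilde{\rm bII}^{e}$, $\widetilde{\rm bII}^{f}$ are exactly the codimension-two fibers through a $\Sigma^{2,0}_{1,0}$ point --- is independently confirmed by the paper's stated ranks: $\dim C^{2}$ drops from $160$ to $154$, a loss of $6 = 3\times 2$ generators, and the adjacency pattern (each of $d,e,f$ is adjacent to both interior and boundary fold types, while $a,b$ are purely interior and $c$ purely boundary) is consistent only with this identification. So this step is not a genuine obstacle.
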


Note that the ranks of $C^i(\mathcal{AS}_{\mathrm{pr}}(3, 2), \rho_{3, 2}(2))$, 
$i =0, 1, 2$, are equal to 
$2$, $18$ and $154$, respectively. 

Let 
\[
s_{\kappa \ast} \co 
H^\kappa(\mathcal{AS}_{\mathrm{pr}}(3, 2), \rho_{3, 2}(2)) \to 
H^\kappa(\mathcal{AS}_{\mathrm{pr}}(2, 1), \rho_{2, 1}(2))
\]
be the homomorphism induced by suspension $s_{\kappa}$. 
Then, we obtain the following cobordism invariants. 

\begin{cor}
Let $W$ be the real line $\R$ or the circle $S^1$.

$(1)$ The cohomology class $s_{1 \ast} \alpha$ induces a non-trivial 
$\mathcal{AS}_{\mathrm{pr}}$-cobordism invariant for
stable Morse functions 
$f\co V \to W$ on compact surfaces $V$ with boundary
into $W$. 

$(2)$ The cohomology classes
$s_{1 \ast} \beta$ and $s_{1 \ast} \gamma$ induce 
trivial $\mathcal{AS}_{\mathrm{pr}}$-cobordism invariants for
stable Morse functions 
$f\co V \to W$ of compact surfaces $V$ with boundary
into $W$. 
\end{cor}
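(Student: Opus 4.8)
The plan is as follows. For a stable Morse function $f\co V \to W$ and a codimension-one cocycle $c$ of the relevant universal complex, the invariant to be analysed is $I_{s_{1\ast}[c]}(f) = \varphi_f(s_{1\ast}[c]) = [c(f)]^\ast \in H^1(W;\Z_2)$; since $s_1$ is induced by the identity on codimension-one classes (cf.\ Remark~\ref{rem:Morse}), $c(f) \subset W$ is just the finite set of singular values over which the fiber of $f$ is of one of the types entering $c$, up to disjoint union with regular fibers. As $H^1(\R;\Z_2) = 0$, assertion $(2)$ is immediate when $W = \R$, and the content of $(1)$ lies in the case $W = S^1$; so throughout I would take $W = S^1$, where $H^1(S^1;\Z_2) \cong \Z_2$ and $I_{s_{1\ast}[c]}(f)$ is the Poincar\'{e} dual of the class of $c(f)$ in $H_0(S^1;\Z_2)$, hence vanishes exactly when $\#\,c(f)$ is even.

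For $(2)$, the plan is to reduce to the corresponding discussion for $b\mathcal{S}_{\mathrm{pr}}$ given above. Every stable Morse function on a compact surface is a submersion near its boundary, so $\mathcal{AS}_{\mathrm{pr}}(2,1) = b\mathcal{S}_{\mathrm{pr}}(2,1)$ with the same relation $\rho_{2,1}(2)$; moreover the displayed representatives of $\beta$ and $\gamma$ in $H^1(\mathcal{AS}_{\mathrm{pr}}(3,2),\rho_{3,2}(2))$ are literally the same cochains as those of $\beta$ and $\gamma$ in $H^1(b\mathcal{S}_{\mathrm{pr}}(3,2),\rho_{3,2}(2))$, and $s_{1\ast}$ acts identically. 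Hence $s_{1\ast}\beta$ and $s_{1\ast}\gamma$ are exactly the classes already shown there --- via Lemma~\ref{coexistence2to1} for $\beta$ and the method of \cite[Lemma~14.1]{Saeki04} for $\gamma$ --- to have the property that $\beta(f)$ and $\gamma(f)$ consist of an even number of points for every stable Morse function $f$. Since triviality of the induced invariant only asserts $\varphi_f(s_{1\ast}\beta) = \varphi_f(s_{1\ast}\gamma) = 0$ for every such $f$, a statement about individual maps independent of which cobordism relation one uses, the same computation gives $(2)$ for $\mathcal{AS}_{\mathrm{pr}}$-cobordism.

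For $(1)$, the plan is to exhibit a single stable Morse function $f_0\co V \to S^1$ on a compact surface $V$ with boundary for which $\#\,\alpha(f_0)$ is odd. Stable Morse functions and their suspensions are admissible, so such an $f_0$ is an $\mathcal{AS}_{\mathrm{pr}}$-map, and then $I_{s_{1\ast}\alpha}(f_0) = [\alpha(f_0)]^\ast$ is the nonzero element of $H^1(S^1;\Z_2)$; by Proposition~\ref{key-lem} this value depends only on the $\mathcal{AS}_{\mathrm{pr}}$-cobordism class of $f_0$, so a single nonzero value already proves non-triviality. To build $f_0$ I would start from a map close to a circle-bundle-type projection $V \to S^1$ (taking $V$ to be an annulus or, in order to make the parity work out, a M\"{o}bius band) and perturb it so that the resulting degeneracies resolve into singular fibers whose $\alpha$-contributions are odd in number. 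The non-orientable case is natural here: the fibers $\widetilde{\rm bI}^9$ and $\widetilde{\rm bI}^{10}$ enter $\alpha$, are unconstrained by every co-existence relation, and occur only for non-orientable sources (cf.\ the Remark following Proposition~\ref{ClassifyFibers}), while none of the relations in Lemma~\ref{coexistence2to1} or Remark~\ref{coexistence2to1_0} fixes the parity of $|\widetilde{\rm bI}^2(f)| + |\widetilde{\rm bI}^3(f)| + |\widetilde{\rm bI}^4(f)| + |\widetilde{\rm bI}^5(f)| + |\widetilde{\rm bI}^9(f)| + |\widetilde{\rm bI}^{10}(f)|$.

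The hard part will be this last step: producing an explicit $f_0$ and verifying the precise $C^\infty$-equivalence types of its singular fibers against Figure~\ref{fibersbM2toR}, using the local level-set models of Lemmas~\ref{lemma2.2} and \ref{lemma2.3} --- in particular distinguishing the parity-changing saddle fibers $\widetilde{\rm bI}^2, \widetilde{\rm bI}^3$ (which lie in $\alpha$) from the parity-preserving $\widetilde{\rm bI}^1, \widetilde{\rm bI}^7$, and correctly recognising $\widetilde{\rm bI}^9$ and $\widetilde{\rm bI}^{10}$ in the non-orientable model. Once one such $f_0$ is in hand, $(1)$ follows at once.
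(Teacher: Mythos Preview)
Your treatment of $(2)$ is correct and essentially identical to the paper's: both invoke Lemma~\ref{coexistence2to1} for $\beta$ and the method of \cite[Lemma~14.1]{Saeki04} for $\gamma$, and you are right that the admissible and general settings coincide at the level of $(2,1)$-maps.

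For $(1)$, however, your proposal diverges from the paper in two respects, and is incomplete.

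First, you dismiss $W = \R$ on the grounds that $H^1(\R;\Z_2) = 0$. The paper does not: its example is precisely a height function $f\co D^2 \to \R$ (Figure~\ref{Example2}), after which it remarks that the same construction works for $S^1$. While you are right that $\varphi_f$ as literally defined lands in $H^1(\R;\Z_2)=0$, the parity $\#\alpha(f)\bmod 2$ is nonetheless a genuine $\mathcal{AS}_{\mathrm{pr}}$-cobordism invariant for compact sources even over $\R$: in the argument of Proposition~\ref{key-lem} the $1$-chain $\alpha(F)\subset \R\times[0,1]$ is compact because $X$ is, so its boundary has even cardinality. It is this parity that the Corollary asserts to be non-trivial, and the disk example already detects it.

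Second, and more substantively, you only outline a plan---searching for an example on a M\"{o}bius band so as to exploit $\widetilde{\mathrm{bI}}^9$ and $\widetilde{\mathrm{bI}}^{10}$---and explicitly flag the construction and fiber-identification as ``the hard part'' still to be done. The paper's route is much simpler and avoids non-orientable phenomena entirely. For a stable Morse function on a compact surface with boundary, the codimension-one fibers arising from \emph{interior} critical points are exactly those of types $\widetilde{\mathrm{bI}}^2,\ldots,\widetilde{\mathrm{bI}}^5$ (and $\widetilde{\mathrm{bI}}^9$, $\widetilde{\mathrm{bI}}^{10}$ in the non-orientable case), all of which lie in $\alpha$, while the fibers arising from critical points of $f|_{\partial V}$ are of types $\widetilde{\mathrm{bI}}^6$, $\widetilde{\mathrm{bI}}^7$, $\widetilde{\mathrm{bI}}^8$, none of which lie in $\alpha$. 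Hence $\#\alpha(f)$ is simply the number of interior critical points. A height function on $D^2$ with a single interior extremum (and the two inevitable critical points of $f|_{\partial D^2}$) therefore has $\#\alpha(f)=1$, which is all that is needed. Your M\"{o}bius-band plan would presumably also succeed, but it is unnecessarily elaborate and, as written, not yet a proof.
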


\begin{proof}
A stable Morse function $f\co D^2 \to \R$ given by
the height function as depicted in  
Figure~\ref{Example2}
shows that the cobordism invariant $s_{1 \ast} \alpha$ is non-trivial, 
where $D^2$ denotes the $2$-dimensional disk. 
The same construction works also for $W = S^1$.

\begin{figure}[t]
\psfrag{D}{$D^2 =$}
\psfrag{f}{$f$}
\includegraphics[scale=.8]{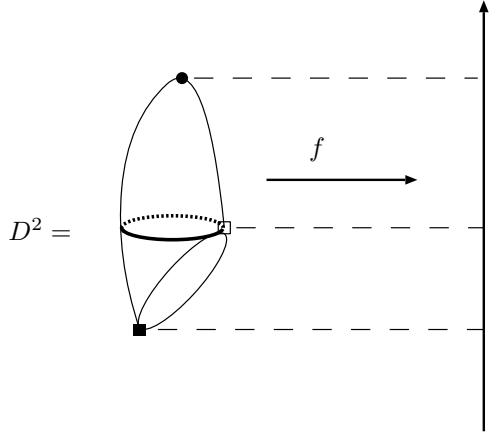}
\caption{A stable Morse function on the disk}
\label{Example2}
\end{figure}

On the other hand,
Remark~\ref{coexistence2to1} shows that the invariant
associated with $s_{1 \ast} \beta$ 
is trivial. 
The triviality of the invariant associated with
$s_{1 \ast} \gamma$ is proved by using the same argument
as in \cite[Lemma~14.1]{Saeki04}, so we omit the details here. 
\end{proof}

\begin{rem}
The above corollary shows that even if we take a non-trivial 
cohomology class of the universal complex, 
the corresponding cohomology class in the target manifold can be 
always trivial. 
\end{rem}

\begin{rem}
We can also consider 
co-orientable singular fibers
in the sense of \cite[Definition~10.5]{Saeki04} and
calculate the $1$st cohomology group of the
universal complex
of co-orientable fibers 
for proper admissible stable
maps of $3$-dimensional manifolds with boundary into surfaces
without boundary with respect to the $C^0$ equivalence
modulo two regular fibers. Unfortunately, the cobordism
invariants that we get are all
trivial, so we omit the details here
(for details, see~\cite{SY15}).
\end{rem}

\section*{Acknowledgment}\label{ack}
The authors would like to express their sincere gratitude to
Shigeo Takahashi, Daisuke Sakurai,
Hsiang-Yun Wu, Keisuke Kikuchi and
Hamish Carr for stimulating discussions and for
posing interesting questions.
The authors would also like to thank the referee
for very important comments which drastically improved the
presentation of the paper.
The first author has been supported in part by JSPS KAKENHI Grant Number 
23244008, 23654028, 25540041, 15K13438.
The second author has been supported in part by JSPS KAKENHI Grant Number 23654028,
15K13438.


\end{document}